\documentclass[11pt,reqno,oneside]{amsart}
\usepackage[T1]{fontenc}
\usepackage{mathpazo}
\usepackage[a4paper,left=30.5mm,right=30.5mm,textheight=224mm,footskip=24pt]{geometry}
\usepackage{amssymb,mathtools,microtype}
\usepackage{enumitem}
\usepackage{tikz}
\usetikzlibrary{arrows.meta}
\usepackage[hidelinks]{hyperref}
\hypersetup{pdftitle={A lifting defect for oriented t3 and bar t6 moves},pdfauthor={Wenbiao Liu}}
\newtheorem{theorem}{Theorem}[section]
\newtheorem{proposition}[theorem]{Proposition}
\newtheorem{lemma}[theorem]{Lemma}
\newtheorem{corollary}[theorem]{Corollary}
\newtheorem{conjecture}[theorem]{Conjecture}
\theoremstyle{definition}
\newtheorem{definition}[theorem]{Definition}
\newtheorem{example}[theorem]{Example}
\theoremstyle{remark}
\newtheorem{remark}[theorem]{Remark}
\theoremstyle{plain}
\newtheorem{thmA}{Theorem}

\newcommand{\F}{\mathbb F_3}
\newcommand{\R}{\mathcal R}

\newcommand{\Col}{\operatorname{Col}}
\newcommand{\Aff}{\operatorname{Aff}}
\newcommand{\Hom}{\operatorname{Hom}}
\newcommand{\rank}{\operatorname{rank}}
\newcommand{\im}{\operatorname{im}}
\newcommand{\rad}{\operatorname{rad}}

\newcommand{\eps}{\varepsilon}
\newcommand{\tr}{\mathsf T}

\newcommand{\coker}{\operatorname{coker}}

\numberwithin{equation}{section}
\newcommand{\twistcolumn}[2]{%
 \foreach \k in {1,...,#2}{%
  \pgfmathsetmacro{\ytop}{0.62*(#2/2-\k+1)}%
  \pgfmathsetmacro{\ybot}{\ytop-0.62}%
  \ifnum#1=1
    \draw[line width=.75pt] (0.34,\ytop)--(-0.34,\ybot);
    \draw[white,line width=3.5pt] (-0.34,\ytop)--(0.34,\ybot);
    \draw[line width=.75pt] (-0.34,\ytop)--(0.34,\ybot);
  \else
    \draw[line width=.75pt] (-0.34,\ytop)--(0.34,\ybot);
    \draw[white,line width=3.5pt] (0.34,\ytop)--(-0.34,\ybot);
    \draw[line width=.75pt] (0.34,\ytop)--(-0.34,\ybot);
  \fi}}

\title[A lifting defect for oriented moves]{A lifting defect for oriented $t_3$ and $\bar t_6$ moves}
\author{Wenbiao Liu}
\date{}
\subjclass[2020]{57K10, 57K12}
\begin{document}
\begin{abstract}
We study oriented \(t_3\)- and \(\bar t_6\)-moves through the problem of
lifting Fox \(3\)-colorings to colorings by a nine-element Alexander quandle.
We define a nonnegative integer lifting defect \(\delta\) for oriented
links and prove that it is invariant under these moves.
For knots, the defect equals the rank of an alternating bilinear form
obtained from a Seifert matrix, so it is even.
We also prove that it is additive under connected sum.
Let \(K_s\) be the connected sum of \(s\geq1\) copies of \(9_{46}\).
Then \(\delta(K_s)=2s\), so these knots represent infinitely many distinct
equivalence classes under the oriented moves, and none is equivalent
to an unlink. After forgetting orientation, each \(K_s\) can be transformed
into an unlink with \(2s+1\) components by \(3s\) ordinary \(3\)-moves.
\par\smallskip
\noindent\textit{Keywords.} Oriented local moves, Alexander quandle,
Fox coloring, lifting obstruction, Seifert matrix.
\end{abstract}
\maketitle

\section{Introduction}\label{sec:intro}

A basic question in the study of local moves is the following:
given a collection of allowed local replacements, which knots or links
can be transformed into unlinks?
A \(3\)-move is an unoriented local move that replaces two parallel
crossing-free arcs with three half-twists, or performs the inverse
replacement.
In 1981, Nakanishi proposed the following conjecture, commonly known
as the Montesinos--Nakanishi \(3\)-move conjecture; see \cite{DP}.

\begin{conjecture}[Montesinos--Nakanishi]
	\label{conj:3move}
	Every unoriented link can be transformed into an unlink by a finite
	sequence of \(3\)-moves and  isotopies.
\end{conjecture}

For oriented links, local moves must also be compatible with the
orientations of the arcs.
We consider two oriented moves, \(t_3\) and \(\bar t_6\):
the former inserts or removes three half-twists between two coherently
oriented arcs, while the latter inserts or removes six half-twists
between two oppositely oriented arcs.
Twists of either handedness are allowed; see Figure~\ref{fig:moves}.

\begin{figure}[htbp]\centering
\begin{tikzpicture}
% t_3: two parallel strands <-> three half-twists (either hand)
\begin{scope}[scale=0.72]
\draw[-{Stealth[length=1.7mm]}] (-0.34,1.38)--(-0.34,-1.38);
\draw[-{Stealth[length=1.7mm]}] (0.34,1.38)--(0.34,-1.38);
\end{scope}
\node at (1.6,0) {$\longleftrightarrow$};\node at (1.6,0.4) {$t_3$};
\foreach \sx/\h in {3.2/1,6.2/-1}{
 \begin{scope}[shift={(\sx,0)},scale=0.72]
 \twistcolumn{\h}{3}
 \draw[-{Stealth[length=1.7mm]}] (-0.34,1.38)--(-0.34,0.93);
 \draw[-{Stealth[length=1.7mm]}] (-0.34,-0.93)--(-0.34,-1.38);
 \draw[-{Stealth[length=1.7mm]}] (0.34,1.38)--(0.34,0.93);
 \draw[-{Stealth[length=1.7mm]}] (0.34,-0.93)--(0.34,-1.38);
 \end{scope}}
\node at (4.7,0) {or};
% bar t_6: two antiparallel strands <-> six half-twists (either hand)
\begin{scope}[shift={(0,-3.6)},scale=0.72]
\draw[-{Stealth[length=1.7mm]}] (-0.34,2.31)--(-0.34,-2.31);
\draw[-{Stealth[length=1.7mm]}] (0.34,-2.31)--(0.34,2.31);
\end{scope}
\node at (1.6,-3.6) {$\longleftrightarrow$};\node at (1.6,-3.2) {$\bar t_6$};
\foreach \sx/\h in {3.2/1,6.2/-1}{
 \begin{scope}[shift={(\sx,-3.6)},scale=0.72]
 \twistcolumn{\h}{6}
 \draw[-{Stealth[length=1.7mm]}] (-0.34,2.31)--(-0.34,1.86);
 \draw[-{Stealth[length=1.7mm]}] (-0.34,-1.86)--(-0.34,-2.31);
 \draw[-{Stealth[length=1.7mm]}] (0.34,1.86)--(0.34,2.31);
 \draw[-{Stealth[length=1.7mm]}] (0.34,-2.31)--(0.34,-1.86);
 \end{scope}}
\node at (4.7,-3.6) {or};
\end{tikzpicture}
\caption{The oriented local replacements, with the exterior diagram fixed. }
\label{fig:moves}
\end{figure}

In 1987, Przytycki proposed the following conjecture
\cite{Kirby}.

\begin{conjecture}[Przytycki]
	\label{conj:t3-t6}
	Every oriented link can be transformed into an unlink by a finite
	sequence of \(t_3\)- and \(\bar t_6\)-moves and isotopies.
\end{conjecture}

In 2002, D\k{a}bkowski and Przytycki used Burnside groups of links to prove
that certain links cannot be transformed into any unlink by
\(3\)-moves, thereby disproving Conjecture~\ref{conj:3move}
\cite{DP}.

Their result also disproves Conjecture~\ref{conj:t3-t6}.
Indeed, after orientations are forgotten, a \(t_3\)-move is a single
\(3\)-move, while a \(\bar t_6\)-move can be decomposed into two
\(3\)-moves.
Thus, if an oriented link can be transformed into an unlink by
\(t_3\)- and \(\bar t_6\)-moves, then the underlying unoriented link
can also be transformed into an unlink by \(3\)-moves.
Consequently, the counterexamples constructed by D\k{a}bkowski and
Przytycki, with any choice of orientations, also provide counterexamples
to Conjecture~\ref{conj:t3-t6}.

Knot counterexamples also follow from the braid example in
\cite{DP}. Indeed, coherently oriented \(t_3\)-moves can join its
components to produce a knot without changing its equivalence
class under the oriented moves.\footnote{Let \(\gamma=(\sigma_1\sigma_2\sigma_3\sigma_4)^{10}\),
whose closure is a counterexample to the \(3\)-move conjecture
\cite{DP}. Orient all strands downwards and append
\(\sigma_1^3\sigma_2^3\sigma_3^3\sigma_4^3\).
These four \(t_3\)-moves change the closure permutation from the
identity to a five-cycle, so the new closure is a knot. If this knot
were reducible to an unlink by \(3\)-moves, then so would be
\(\widehat\gamma\), a contradiction.}
The knots obtained in this way are still not reducible to an unlink
by unoriented \(3\)-moves.  We consider a more specific question:
can a knot be transformed into an unlink by  \(3\)-moves,
yet fail to be transformed into any unlink by the oriented
\(t_3\)- and \(\bar t_6\)-moves?

Write \(L\sim L'\) when the oriented links \(L,L'\) are related by
 isotopies respecting their orientations and finitely many
\(t_3\)- and \(\bar t_6\)-moves.
Let \(U_m\) denote the \(m\)-component unlink, where \(m\geq1\).
The target is allowed to have any number of components, since these
local moves can change that number.
Our first result gives a concrete distinction between the two
notions of reduction.

\begin{thmA}\label{thm:A}
	The knot \(9_{46}\), with either orientation, is not
	\(\sim\)-equivalent to \(U_m\) for any \(m\geq1\).
	On the other hand, after its orientation is forgotten, it can be
	transformed into \(U_3\) by three \(3\)-moves.
	Both statements also hold for its mirror image.
\end{thmA}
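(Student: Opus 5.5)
The plan is to build, following the abstract, a nonnegative integer invariant $\delta$ of oriented links that is unchanged by $t_3$- and $\bar t_6$-moves, and then compute it for $9_{46}$ and for unlinks. Fix the nine-element Alexander quandle $X$, which we take to be $\mathbb Z[t^{\pm1}]/(3,(t+1)^2)$. Reducing modulo $t+1$ sends $X$ onto the dihedral quandle on $\F$, so every $X$-coloring of a diagram $D$ projects to a Fox $3$-coloring. Put
\[
\delta(L)=\dim_{\F}\Col_3(D)-\dim_{\F}\im\bigl(\Col_X(D)\to\Col_3(D)\bigr),
\]
the codimension of the Fox $3$-colorings that lift to $X$.

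\textbf{Step 1: invariance.} Both coloring spaces are $\F$-vector spaces, or affine versions of them. Under a $t_3$-move on coherently oriented strands, and a $\bar t_6$-move on oppositely oriented strands, the colorings of the outside arcs are determined in a way that does not depend on the twist region. For the Fox coloring this is the standard fact that $3$ half-twists act trivially mod $3$. For $X$ it comes from an explicit computation: the braid action of $\sigma^3$ on pairs in $X^2$ with the right orientation data, and of $\sigma^6$ on antiparallel pairs, should be the identity, because $t$ has order dividing $6$ in $X$ and the relevant polynomials $1-t+t^2$ vanish modulo $(3,(t+1)^2)$. So the boundary behaviour is the same before and after the move. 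This gives bijections between colorings that are compatible with the projection, and hence $\delta$ is invariant. This check, in particular the $\bar t_6$ case, is the main technical obstacle. If $X$ does not work directly, I would instead compare the colorings that extend across the tangle on each side, and show that the two sets of extendable boundary colorings agree.

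\textbf{Step 2: the unlink.} For $U_m$, every Fox coloring is constant on each component, and every such coloring lifts by choosing constant colors in $X$. So $\delta(U_m)=0$.

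\textbf{Step 3: the knot $9_{46}$.} Take a Seifert matrix $V$. The $X$-coloring module is presented by $tV-V^{\tr}$ over $\mathbb Z[t^{\pm1}]/(3,(t+1)^2)$. Write $t=-1+\epsilon$ with $\epsilon^2=0$. Then a lift of a Fox coloring $a\in\ker(V+V^{\tr})$ over $\F$ exists exactly when the first-order obstruction $Va$ (up to sign) lies in $\im(V+V^{\tr})$. The obstruction map therefore induces a form on $\ker(V+V^{\tr})$ given by $a^{\tr}Vb$, and this form is alternating there. Its rank equals $\delta$.

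For $9_{46}$ the Seifert matrix is
\[
V=\begin{pmatrix}0&2\\1&0\end{pmatrix}
\quad\text{or}\quad
V=\begin{pmatrix}0&2\\1&0\end{pmatrix}^{\tr},
\]
up to convention. Then $V+V^{\tr}\equiv 0 \pmod 3$, so the kernel is all of $\F^2$, and the form $a^{\tr}Vb$ has matrix $V$ itself, whose antisymmetric part is nonzero. Hence $\delta(9_{46})=2\neq0$, and $9_{46}\not\sim U_m$ for every $m$.

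The other orientation and the mirror image have Seifert matrices $V^{\tr}$ and $-V$ (or $-V^{\tr}$). The rank is unchanged for each of these, so the same conclusion holds.

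\textbf{Step 4: the unoriented reduction.} $9_{46}$ is the pretzel knot $P(3,3,-3)$. Apply a $3$-move in each of its three twist columns, replacing three half-twists by zero half-twists (that is, by a vertical smoothing). The resulting diagram is a trivial three-component link, $U_3$. The mirror image is $P(-3,-3,3)$ and is handled in the same way.
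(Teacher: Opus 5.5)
Your plan is correct, but the paper proves Theorem~\ref{thm:A} differently. It colors a nine-crossing braid representative of $9_{46}$ directly and reads off $N_3=27$ and $N_9=81$ from $B-I_4=P_0+\eps P_1$. It then notes that $27=3^m$ forces $m=3$, while $N_9(U_3)=729\neq81$. The other orientation and the mirror are handled by Lemma~\ref{lem:mirror}, using the involution $a+\eps b\mapsto a-\eps b$ to identify $J_9$ with its dual.

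Your Step~3 is instead the Seifert-form machinery the paper develops only later (Lemma~\ref{lem:module}, Theorem~\ref{thm:seifert}). Your computation for $9_{46}$ is exactly Example~\ref{ex:same-alexander}.

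The paper's route is elementary and self-contained. Yours relies on facts you assert rather than prove. First, after splitting off the color of one arc, the reduced colorings are $\ker(tV-V^{\tr})$, compatibly with the reduction $\R\to\F$. Second, $\im W=(\ker W)^{\perp}$ for symmetric $W$. You also need a Seifert matrix from the literature. In exchange, your route is insensitive to conventions: replacing $V$ by $V^{\tr}$, or $t$ by $t^{-1}$, only negates $\beta_K$ on $\ker W$. It also gives the reversed and mirror knots for free, since $V^{\tr}$, $-V$ and $-V^{\tr}$ give forms of the same rank.

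One correction in Step~1: your stated reason for the $\bar t_6$ case is not the one that does the work. The antiparallel block acts by $(CS)^{\pm3}$, and $[C][S]=I+(1-t^{-1})H$ with $H^2=0$ (see \eqref{eq:D}). Hence $(CS)^3=I+3(1-t^{-1})H$, and it is $3=0$ in $X$ that makes this the identity (Lemma~\ref{lem:criterion}). The conditions ``$t$ has order dividing $6$'' and $t^2-t+1=0$ do not suffice. In $\Aff(\mathbb Z/7,3)$ both hold, but $3(1-t^{-1})\equiv2\neq0$, so the six-twist antiparallel block does not act trivially on boundary colors there. The fix is immediate, so your plan stands.
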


The proof uses two finite Alexander quandles over the field \(\F\)
of three elements:
\[
R_3=\Aff(\F,-1),\qquad
J_9=\Aff\bigl(\R,\theta\bigr),\qquad
\R=\F[\eps]/(\eps^2),\quad \theta=-1+\eps.
\]
Here the operation on \(\Aff(A,t)\) is
\(x*y=tx+(1-t)y\).
Let \(N_3(L)\) and \(N_9(L)\) denote the corresponding numbers of
colorings, including constant colorings.
We will verify directly that \(N_3\) and \(N_9\) are invariant under
the oriented moves above and compute
\[
\bigl(N_3(9_{46}),N_9(9_{46})\bigr)=(27,81),
\qquad
\bigl(N_3(U_m),N_9(U_m)\bigr)=(3^m,9^m).
\]
The two counts cannot simultaneously agree with those of any unlink,
giving the oriented obstruction in Theorem~\ref{thm:A}.

The discrepancy between these coloring counts also reflects a
lifting problem.
The reduction map
\[
\R\longrightarrow\F,
\qquad
a+\eps b\longmapsto a
\]
sends every \(J_9\)-coloring to an \(R_3\)-coloring,
but a given \(R_3\)-coloring need not lift to a \(J_9\)-coloring.
To describe this obstruction, we define
\begin{equation}\label{eq:introdelta}
	\delta(L)=2\log_3 N_3(L)-\log_3 N_9(L),
\end{equation}
which we call the coloring lifting defect of \(L\).

The invariance of the coloring counts implies that \(\delta\)
is invariant under \(t_3\)- and \(\bar t_6\)-moves.
The computations above give
\[
\delta(U_m)=0,
\qquad
\delta(9_{46})=2.
\]
In Section~\ref{sec:lifting}, we will prove that \(\delta(L)\)
is a nonnegative integer equal to the rank of a linear lifting
obstruction map.
Moreover, the proportion of all \(R_3\)-colorings that lift to
\(J_9\)-colorings is exactly \(3^{-\delta(L)}\).
Thus, \(\delta(L)\) provides an obstruction to transformation into
an unlink and quantifies the obstruction to lifting \(3\)-colorings.

For knots, this defect can be computed from a Seifert matrix
and has the following properties.

\begin{thmA}\label{thm:B}
	Let \(V\) be a Seifert matrix of a knot \(K\).
	Reduce its entries modulo \(3\), retaining the notation \(V\), and set
	\[
	W=V+V^{\tr},\qquad E_K=\ker W,\qquad
	\beta_K(x,y)=x^{\tr}Vy\quad(x,y\in E_K).
	\]
	Then \(\beta_K\) is an alternating bilinear form, and
	\[
	\delta(K)=\rank\beta_K.
	\]
	In particular, \(\delta(K)\) is always a nonnegative even integer.
	For any oriented knots \(K_1,K_2\), we have
	\[
	\delta(K_1\#K_2)=\delta(K_1)+\delta(K_2).
	\]
\end{thmA}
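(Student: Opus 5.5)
We will compute both coloring spaces through the Alexander module presented by a Seifert matrix. For an Alexander quandle \(\Aff(A,t)\), the colorings of a knot diagram form the affine space \(A\oplus \ker\bigl(tV-V^{\tr}\bigr)\) on \(A^{2g}\), up to the standard identification. Concretely, colorings modulo the constant ones correspond to \(\Hom_{\Lambda}(H_1(\tilde X),A)\), where \(H_1(\tilde X)\) is presented by \(tV-V^{\tr}\), taken here as a presentation over \(\Lambda=\mathbb Z[t^{\pm1}]\) for the infinite cyclic cover (using that it agrees with the Wirtinger presentation up to free summands). Thus
\[
N_3(K)=3\cdot\bigl|\ker_{\F}(-V-V^{\tr})\bigr| ,\qquad
N_9(K)=9\cdot\bigl|\ker_{\R}(\theta V-V^{\tr})\bigr| ,
\]
where \(V\) acts on \(\F^{2g}\), respectively on \(\R^{2g}\). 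The first gives \(\log_3N_3=1+\dim E_K\).

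For the second, we write \(x=a+\eps b\) with \(a,b\in\F^{2g}\). Then \((\theta V-V^{\tr})x=0\) splits as
\[
Wa=0,\qquad -Wb+Va=0
\]
(up to sign). So the kernel consists of pairs \((a,b)\) with \(a\in E_K\) and \(Va\in\im W\). Since \(W\) is symmetric, \(\im W=(\ker W)^{\perp}=E_K^{\perp}\). Hence the condition on \(a\) is \(y^{\tr}Va=0\) for all \(y\in E_K\), i.e.\ \(a\in\rad\beta_K\) (a one-sided radical, but see below). For each such \(a\), the solutions \(b\) form a coset of \(E_K\). Therefore
\[
\log_3N_9=2+\dim E_K+\dim E_K-\rank\beta_K ,
\]
and \(\delta=2\log_3N_3-\log_3N_9=\rank\beta_K\). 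This matches the lifting interpretation: the map \(a\mapsto (Va \bmod \im W)\) is the linear obstruction map from Section~\ref{sec:lifting}, and we can cite that section's identification of \(\delta\) with its rank instead of redoing the count. The step to be careful with is the passage from diagram colorings to the Seifert presentation over the local ring \(\R\). The presentation matrix \(tV-V^{\tr}\) and the Wirtinger matrix differ by elementary operations over \(\Lambda\) and by stabilization, and \(\R\) is a \(\Lambda\)-module via \(t\mapsto\theta\), which is a unit. So \(\Hom\) counts agree, and the extra factor \(|A|\) accounts for the constant colorings. I expect this identification to be the main obstacle, and I would handle it by citing the standard fact that \(tV-V^{\tr}\) presents the Alexander module, together with the structure of the coloring space as \(A\oplus\Hom_\Lambda(\cdot,A)\) for knots.

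To see that \(\beta_K\) is alternating, take \(x\in E_K\). Then \(x^{\tr}Vx=\tfrac12 x^{\tr}Wx=0\), with \(2\) invertible mod \(3\). Hence \(\beta_K\) is skew-symmetric, its left and right radicals coincide, and its rank is even.

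Additivity follows because a Seifert matrix of \(K_1\#K_2\) is the block sum \(V_1\oplus V_2\). Then \(W\), \(E_K\) and \(\beta_K\) all decompose as orthogonal direct sums, so the ranks add. Independence of the choice of Seifert matrix needs no separate argument, since \(\delta\) is defined via coloring counts.
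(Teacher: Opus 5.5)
Your proposal is correct and is essentially the paper's proof. It uses the Seifert presentation of the Alexander module to get $|\Col_M(K)|=|M|\,|\ker(tV-V^{\tr})|$. It then splits $\ker(-W+\eps V)$ into $Wa=0$ and $Wb=Va$, uses $\im W=E_K^\perp$ to force $a\in\rad\beta_K$, and takes block sums of Seifert matrices for additivity.

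The differences are minor. You get $\beta_K(x,x)=0$ directly from $2x^{\tr}Vx=x^{\tr}Wx=0$, and you quote evenness of alternating rank rather than proving it. You also use the coincidence of left and right radicals to absorb the transpose/$t\leftrightarrow t^{-1}$ ambiguity, which the paper instead tracks explicitly through $A_K^\iota$ and $Q(t)=tV^{\tr}-V$.
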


Additivity allows us to pass from a single example to an infinite
family. Set
\[
K_s=\underbrace{9_{46}\#\cdots\#9_{46}}_{s\text{ copies}},
\qquad s\geq1.
\]

\begin{thmA}\label{thm:C}
	For every \(s\geq1\),
	\[
	N_3(K_s)=3^{2s+1},\qquad
	N_9(K_s)=3^{2s+2},\qquad
	\delta(K_s)=2s.
	\]
The knots \(K_s\) are pairwise inequivalent under \(\sim\), and no
\(K_s\) is \(\sim\)-equivalent to any unlink.
After forgetting orientation, \(K_s\) can be transformed into
\(U_{2s+1}\) by \(3s\) ordinary \(3\)-moves.
The same conclusions hold for the family of mirror images of the
\(K_s\).
\end{thmA}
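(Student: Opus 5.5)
The plan is to obtain the three numerical invariants of \(K_s\) from the single-knot data and the multiplicativity of coloring counts under connected sum. Then I read off the inequivalence statements from Theorem~\ref{thm:B} and the invariance of \(N_3,N_9,\delta\). The unoriented reduction comes from an explicit construction, namely iterating the three \(3\)-moves of Theorem~\ref{thm:A}.

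\textbf{Step 1: coloring counts.} For any Alexander quandle \(\Aff(A,t)\) with \(1-t\) invertible or not, nontrivial colorings of a connected sum \(K_1\#K_2\) correspond to pairs of colorings of \(K_1,K_2\) that agree on the arc where the two knots are joined. The coloring space of a knot is an \(A\)-module of the form \(A\oplus M\), where the constant colorings give the summand \(A\); equivalently, one fixes the color of a base arc. Hence
\[
N(K_1\#K_2)=N(K_1)N(K_2)/|A|.
\]
With \(N_3(9_{46})=27\), \(N_9(9_{46})=81\), \(|\F|=3\) and \(|\R|=9\), induction gives \(N_3(K_s)=3\cdot 9^{s}=3^{2s+1}\) and \(N_9(K_s)=9\cdot 9^{s}=3^{2s+2}\). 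Then \(\delta(K_s)=2(2s+1)-(2s+2)=2s\), which is consistent with additivity in Theorem~\ref{thm:B} and \(\delta(9_{46})=2\). To justify the gluing formula, I will use a diagram of the connected sum in which the joining arcs are identified. Colorings of each summand that agree on that arc glue uniquely, and the quotient by the value on the base arc is the affine translation.

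\textbf{Step 2: inequivalence.} Since \(\delta\) is invariant under \(\sim\), the values \(\delta(K_s)=2s\) are pairwise distinct, so the \(K_s\) are pairwise inequivalent. Since \(\delta(U_m)=0\ne 2s\), no \(K_s\) is equivalent to any unlink. For mirror images, reflecting a diagram and replacing each coloring by itself gives a bijection of colorings for \(\Aff(A,t)\); this works because the crossing relation \(x*y\) at a crossing is unchanged up to relabeling over/under. Mirroring also sends \(t_3\) and \(\bar t_6\) moves to moves of the same type. Hence \(N_3,N_9\) and \(\delta\) agree for \(K_s\) and its mirror. Alternatively, Theorem~\ref{thm:B} applies with \(V\) replaced by \(-V^{\tr}\), whose form has the same rank.

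\textbf{Step 3: unoriented reduction.} I apply the three \(3\)-moves of Theorem~\ref{thm:A} inside each \(9_{46}\) summand. These moves are local and can be performed away from the connecting bands. After them, \(K_s\) becomes a connected sum of \(s\) copies of \(U_3\). A connected sum of a split union with another link, taken along one component of each, is the split union with the components merged. So the result is an unlink with \(3s-(s-1)=2s+1\) components, obtained with \(3s\) moves. For the mirror family, I mirror the moves.

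The main obstacle is Step 3's compatibility. The three \(3\)-moves for \(9_{46}\) must be realizable in a diagram where the connected-sum arc is untouched, and the intermediate isotopies must not disturb the band. This is arranged by choosing the summing arc on a component of the resulting \(U_3\), away from the move sites, since connected sum is well defined up to the choice of component.
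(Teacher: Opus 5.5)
Your strategy is the paper's: multiply the two counts under connected sum, read off $\delta(K_s)=2s$, use invariance of $\delta$, and reduce the chained pretzel diagram by $3s$ moves. Your Step~3 is essentially the paper's argument. Step~1, however, has a real gap.

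\textbf{Step 1.} What you actually justify is that a pair of colorings of $K_1$ and $K_2$ agreeing on the joining arc glues uniquely to a coloring of $K_1\#K_2$. That only gives $N(K_1\#K_2)\geq N(K_1)N(K_2)/|A|$. For equality you need the converse. A coloring of $K_1\#K_2$ restricts, on the $K_1$ side, only to a coloring of the $1$-tangle obtained by cutting $K_1$. This closes up to a coloring of $K_1$ only if the two arcs where the summands are joined receive the same color.

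That end-monochromatic property does not follow from gluing. The splitting $\Col_M(K)\cong M\oplus\Hom_\Lambda(A_K^\iota,M)$ does not supply it either: it gives homogeneity, but your ``hence'' also needs the reduced coloring module of $K_1\#K_2$ to be the product of those of $K_1$ and $K_2$, which is the same missing statement. This converse, not Step~3, is the real obstacle.

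The paper fills the gap with Lemma~4.4 of~\cite{CSV} (faithful quandles are end-monochromatic). It checks that $1-t$ is a unit ($2$ in $\F$, $2-\eps$ in $\R$), then applies Lemma~4.6 to obtain \eqref{eq:connectedcounts}. You can also avoid diagrams entirely:
\begin{itemize}
\item Since $V_1\oplus V_2$ is a Seifert matrix for $K_1\#K_2$ (as in Theorem~\ref{thm:additivity}), the kernel in \eqref{eq:seifertcolorings} splits as a direct sum, and Lemma~\ref{lem:module} gives $N(K_1\#K_2)=N(K_1)N(K_2)/|M|$.
\item Or apply Theorem~\ref{thm:seifert} directly to $V_{9_{46}}^{\oplus s}$. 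There $W\equiv0$ and $d=r=2s$, which yields $3^{2s+1}$ and $3^{2s+2}$.
\end{itemize}

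\textbf{Mirror images.} Your primary mirror argument is incorrect. Reflecting a diagram keeps the arcs but turns every relation $c=a*b$ into $c=a*^{-1}b$. So the $\Aff(A,t)$-colorings of the mirror are the $\Aff(A,t^{-1})$-colorings of the original. Keeping the same colors is legitimate only for involutory targets such as $R_3$. For $J_9$ it fails, since $\theta^{-1}=-1-\eps\neq\theta$. If instead you mirror by switching crossings, the arcs themselves change, so ``the same coloring'' is not defined.

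The paper's Lemma~\ref{lem:mirror} uses the involution $\alpha(a+\eps b)=a-\eps b$, which is an isomorphism from $J_9$ to its dual. Your fallback does work. With the Seifert matrix $-V^{\tr}$, $W$ becomes $-W$, so $d$ is unchanged, and $\beta_K$ is unchanged because it is alternating. Theorem~\ref{thm:seifert} then gives the same three numbers. More simply, Proposition~\ref{prop:knotcounts} already covers the mirror of $9_{46}$, so the repaired Step~1 applies verbatim to the mirror family.
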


Together with Theorem~\ref{thm:B} and \(\delta(U_1)=0\), this shows that
the set of values attained by \(\delta\) on knots is exactly
\(\{0,2,4,\ldots\}\).

Two further applications clarify what the defect detects.
Example~\ref{ex:same-alexander} compares \(9_{46}\) and \(6_1\):
they have the same Alexander polynomial and the same \(N_9\), but
different defects.
Using a Seifert-matrix result of Duzhin and Shkolnikov,
Corollary~\ref{cor:matched} proves that the defect vanishes for
knots admitting matched diagrams. Consequently, no \(K_s\) is
\(\sim\)-equivalent to any knot in that class.

The paper is organized as follows.
Section~\ref{sec:color} fixes the conventions for Alexander quandle
colorings and proves that both coloring counts are invariant under
\(t_3\)- and \(\bar t_6\)-moves.
Section~\ref{sec:knot} computes the coloring counts of \(9_{46}\)
and gives its reduction to an unlink by \(3\)-moves,
completing the proof of Theorem~\ref{thm:A}.
It also computes the lifting defect for the two link counterexamples
of D\k{a}bkowski and Przytycki.
Section~\ref{sec:lifting} develops the linear equations governing
coloring lifts and explains the nonnegativity of \(\delta\)
and its interpretation as a lifting obstruction.
Section~\ref{sec:seifert} gives the Seifert-form expression,
proves the connected-sum formula, and completes the proofs of
Theorems~\ref{thm:B} and~\ref{thm:C}, together with the two applications above.
\section{Colorings and the oriented local moves}\label{sec:color}\label{sec:quandles}
The obstruction in Theorem~\ref{thm:A} comes from two coloring counts.
We first fix the crossing convention and show that each count is preserved
by the oriented moves of Figure~\ref{fig:moves}.
The essential point is local: for either coloring target, a twist block
and the corresponding untwisted block admit exactly the same boundary
colorings, each with a unique extension to the interior.

\subsection{Quandle colorings}
\begin{definition}
A \emph{quandle} is a nonempty set $X$ with a binary operation $*$ such that,
for all $x,y,z\in X$,
\begin{enumerate}[label=\textup{(\arabic*)},nosep]
\item $x*x=x$;
\item the right translation $R_y\colon X\to X$, $x\mapsto x*y$, is bijective;
\item $(x*y)*z=(x*z)*(y*z)$.
\end{enumerate}
The \emph{inverse operation} is defined by $x*^{-1}y=R_y^{-1}(x)$. Thus
\[
(x*^{-1}y)*y=x=(x*y)*^{-1}y.
\]
\end{definition}
An $X$-coloring of an oriented link assigns an element of $X$ to each arc,
subject to the crossing rule in Figure~\ref{fig:conventions}.

\begin{figure}[htbp]\centering
\begin{minipage}{.48\textwidth}\centering
\begin{tikzpicture}[line width=.8pt,>=Stealth]
\draw (-1,0)--(-.12,0);\draw (.12,0)--(1,0);
\draw[->] (0,.75)--(0,-.75);
\node at (-.63,.24) {$a$};\node at (.67,.24) {$c$};
\node at (-.24,.54) {$b$};
\draw[->,dashed] (.16,-.38)--(.83,-.38);
\node at (0,-1.1) {$c=a*b$};
\end{tikzpicture}
\end{minipage}\hfill
\begin{minipage}{.48\textwidth}\centering
\begin{tikzpicture}[line width=.8pt,>=Stealth]
\draw (-1,0)--(-.12,0);\draw (.12,0)--(1,0);
\draw[->] (0,-.75)--(0,.75);
\node at (-.63,.24) {$a$};\node at (.67,.24) {$c$};
\node at (-.24,.54) {$b$};
\draw[->,dashed] (.83,-.38)--(.16,-.38);
\node at (0,-1.1) {$a=c*b\quad\Longleftrightarrow\quad c=a*^{-1}b$};
\end{tikzpicture}
\end{minipage}
\caption{Crossing rules for the two over-strand orientations.}
\label{fig:conventions}
\end{figure}

The three quandle axioms give bijections of coloring sets under the
oriented Reidemeister moves of types I, II and III, respectively
\cite{Joyce}. Hence the coloring set $\Col_X(L)$ is defined up to bijection
by the oriented link $L$. For finite $X$, its cardinality is therefore an
oriented link invariant. Throughout the paper, we count all colorings,
including constant colorings. In a crossingless diagram of $U_m$, the
$m$ components may be colored independently, so
\begin{equation}\label{eq:unlink}
 |\Col_X(U_m)|=|X|^m.
\end{equation}
For the coordinatewise quandle structure on $X\times Y$, a coloring is
precisely a pair of colorings by $X$ and $Y$. Consequently,
\[
|\Col_{X\times Y}(L)|=|\Col_X(L)|\,|\Col_Y(L)|.
\]

\subsection{Alexander quandles and the two targets}
Let $A$ be an abelian group and let $t$ be an automorphism of $A$.
The \emph{Alexander quandle} $\Aff(A,t)$ has underlying set $A$ and operations
\[
x*y=tx+(1-t)y,\qquad
x*^{-1}y=t^{-1}x+(1-t^{-1})y.
\]
The first quandle axiom follows by setting $y=x$, and the second follows
from the displayed inverse operation. The third follows from
\[
(x*y)*z=(x*z)*(y*z)=t^2x+t(1-t)y+(1-t)z.
\]
We will also use this construction when $A$ is a module over a commutative
ring and $t$ is a unit acting by scalar multiplication.

Our first target is the three-element quandle $R_3=\Aff(\F,-1)$, over the field
$\F$ of three elements. Here $x*y=-x+2y$ and $x*^{-1}y=x*y$. Its operation table,
with the first argument indexing the rows, is
\[
\begin{array}{c|ccc}
 *&0&1&2\\ \hline
 0&0&2&1\\
 1&2&1&0\\
 2&1&0&2
\end{array}.
\]
Thus the crossing relation $c=a*b$ is $a+c=2b$, the usual Fox
$3$-coloring rule.

The second target records additional information over the same field.
Set
\[
\R=\F[\eps]/(\eps^2),\qquad \theta=-1+\eps,
\qquad J_9=\Aff(\R,\theta).
\]
Every element of $\R$ has a unique expression $a+\eps b$, with $a,b\in\F$.
The ring has nine elements, but it is neither $\mathbb Z/9$ nor
$\mathbb F_9$: its additive group is $\F^2$ and $\eps$ is a nonzero
nilpotent. 
The operations on the ring are given by
\[
\begin{aligned}
	(a+\eps b)+(c+\eps d)&=(a+c)+\eps(b+d),\\
	(a+\eps b)(c+\eps d)&=ac+\eps(ad+bc).
\end{aligned}
\]
Since $\theta^{-1}=-1-\eps$, the element $\theta$ is a unit.
In \(J_9=\Aff(\R,\theta)\), the automorphism is \emph{multiplication by \(\theta\)}. Therefore
\[
(a+\eps b)*(c+\eps d)
=(-a+2c)+\eps(a-b-c+2d).
\]
In coordinates $(a,b)\leftrightarrow a+\eps b$, the two operations are
\[
\begin{aligned}
 (a,b)*(c,d)&=(-a+2c,\ a-b-c+2d),\\
 (a,b)*^{-1}(c,d)&=(-a+2c,\ -a-b+c+2d),
\end{aligned}
\]
with all entries taken modulo $3$.

The reduction map
\[
\rho\colon J_9\longrightarrow R_3,\qquad a+\eps b\longmapsto a
\]
is a quandle homomorphism. This map will give rise to the lifting problem
studied in Section~\ref{sec:lifting}. We write
\[
 N_3(L)=|\Col_{R_3}(L)|,\qquad N_9(L)=|\Col_{J_9}(L)|.
\]

\subsection{Crossing maps and twist blocks}
To compare the two sides of a local move, order the colors on each
horizontal level from left to right and propagate them geometrically
from top to bottom. This convention is independent of the strand
orientations. We call a crossing \emph{left-over} if its upper-left
endpoint lies on the over-strand, and \emph{right-over} otherwise.

The four cases are shown in Figure~\ref{fig:single-crossings}. Here $S^{-1}$ and $C^{-1}$ denote inverse maps on $X^2$, while $*^{-1}$
denotes the inverse quandle operation. For instance,
$(y*^{-1}x)*x=y$ gives $S^{-1}S(x,y)=(x,y)$.

\begin{figure}[htbp]\centering
\begin{minipage}[t]{.48\textwidth}\centering
\textbf{(a)} $S$\par\smallskip
\begin{tikzpicture}[line cap=round,font=\small]
\draw[line width=.7pt] (0.660,.90)--(-0.660,-.90);
\draw[white,line width=5pt] (-0.660,.90)--(0.660,-.90);
\draw[line width=1.05pt] (-0.660,.90)--(0.660,-.90);
\draw[-{Stealth[length=2.2mm,width=1.6mm]},line width=1.05pt]
 (-0.475,0.648)--(-0.198,0.270);
\node at (-.66,1.15) {$x$}; \node at (.66,1.15) {$y$};
\node at (-.66,-1.15) {$u$}; \node at (.66,-1.15) {$x$};
\end{tikzpicture}\par\small $y=u*x,\quad u=y*^{-1}x$\par\smallskip
\small $S(x,y)=(u,x)=(y*^{-1}x,x)$\end{minipage}
\hfill\begin{minipage}[t]{.48\textwidth}\centering
\textbf{(b)} $S^{-1}$\par\smallskip
\begin{tikzpicture}[line cap=round,font=\small]
\draw[line width=.7pt] (-0.660,.90)--(0.660,-.90);
\draw[white,line width=5pt] (0.660,.90)--(-0.660,-.90);
\draw[line width=1.05pt] (0.660,.90)--(-0.660,-.90);
\draw[-{Stealth[length=2.2mm,width=1.6mm]},line width=1.05pt]
 (0.475,0.648)--(0.198,0.270);
\node at (-.66,1.15) {$x$}; \node at (.66,1.15) {$y$};
\node at (-.66,-1.15) {$y$}; \node at (.66,-1.15) {$v$};
\end{tikzpicture}\par\small $v=x*y$\par\smallskip
\small $S^{-1}(x,y)=(y,v)=(y,x*y)$\end{minipage}
\par\medskip\begin{minipage}[t]{.48\textwidth}\centering
\textbf{(c)} $C$\par\smallskip
\begin{tikzpicture}[line cap=round,font=\small]
\draw[line width=.7pt] (0.660,.90)--(-0.660,-.90);
\draw[white,line width=5pt] (-0.660,.90)--(0.660,-.90);
\draw[line width=1.05pt] (-0.660,.90)--(0.660,-.90);
\draw[-{Stealth[length=2.2mm,width=1.6mm]},line width=1.05pt]
 (-0.198,0.270)--(-0.475,0.648);
\node at (-.66,1.15) {$x$}; \node at (.66,1.15) {$y$};
\node at (-.66,-1.15) {$u$}; \node at (.66,-1.15) {$x$};
\end{tikzpicture}\par\small $u=y*x$\par\smallskip
\small $C(x,y)=(u,x)=(y*x,x)$\end{minipage}
\hfill\begin{minipage}[t]{.48\textwidth}\centering
\textbf{(d)} $C^{-1}$\par\smallskip
\begin{tikzpicture}[line cap=round,font=\small]
\draw[line width=.7pt] (-0.660,.90)--(0.660,-.90);
\draw[white,line width=5pt] (0.660,.90)--(-0.660,-.90);
\draw[line width=1.05pt] (0.660,.90)--(-0.660,-.90);
\draw[-{Stealth[length=2.2mm,width=1.6mm]},line width=1.05pt]
 (0.198,0.270)--(0.475,0.648);
\node at (-.66,1.15) {$x$}; \node at (.66,1.15) {$y$};
\node at (-.66,-1.15) {$y$}; \node at (.66,-1.15) {$v$};
\end{tikzpicture}\par\small $x=v*y,\quad v=x*^{-1}y$\par\smallskip
\small $C^{-1}(x,y)=(y,v)=(y,x*^{-1}y)$\end{minipage}
\caption{The four single-crossing maps, read from top to bottom. }
\label{fig:single-crossings}
\end{figure}

We compose maps with the rightmost factor acting first, so $CS=C\circ S$.
For the interchange map $J(x,y)=(y,x)$, the formulas above give
\[
C=JS^{-1}J,\qquad S=JC^{-1}J.
\]
These identities will allow us to treat the different orientation
choices together.

We now compose the crossing maps to obtain the boundary maps of the twist blocks.
For parallel strands pointing downwards, three left-over crossings give
$S^3$, and three right-over crossings give $S^{-3}$. When both strands
point upwards, the corresponding maps are $C^3$ and $C^{-3}$.
For antiparallel strands, the two positions are exchanged at every
crossing, so the direction of the over-strand alternates. With initial
directions $(\downarrow,\uparrow)$, six left-over crossings are read as
$S,C,S,C,S,C$, giving $(CS)^3$. Six right-over crossings are read as
$C^{-1},S^{-1},C^{-1},S^{-1},C^{-1},S^{-1}$, giving $(CS)^{-3}$.
Interchanging the initial directions interchanges $CS$ and $SC$.
Thus all the twist blocks in Figure~\ref{fig:moves} have the following
boundary maps:
\begin{equation}\label{eq:boundarytable}
\begin{array}{c|c|c|c}
\text{upper directions}&\text{half-twists}&\text{left-over}&\text{right-over}\\\hline
(\downarrow,\downarrow)&3&S^3&S^{-3}\\
(\uparrow,\uparrow)&3&C^3&C^{-3}\\
(\downarrow,\uparrow)&6&(CS)^3&(CS)^{-3}\\
(\uparrow,\downarrow)&6&(SC)^3&(SC)^{-3}
\end{array}.
\end{equation}
The corresponding untwisted block has boundary map $I$.

\begin{lemma}\label{lem:boundary}
Let $X$ be a quandle for which $S^3=I$ and $(CS)^3=I$ on $X^2$.
If two oriented diagrams $D$ and $D'$ differ by a $t_3$- or
$\bar t_6$-move inside a disk, their $X$-colorings are in bijection by
keeping all colors outside the disk fixed. For finite $X$, the coloring
count is therefore invariant under both moves.
\end{lemma}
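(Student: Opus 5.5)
Note first that the hypothesis $S^3=I$ gives $S^{-3}=I$. From the identity $C=JS^{-1}J$ we get $C^3=JS^{-3}J=I$, and hence $C^{-3}=I$. Next, $(CS)^3=I$ gives $(CS)^{-3}=I$. For the remaining entries, $SC=S(CS)S^{-1}$, so $(SC)^3=S(CS)^3S^{-1}=I$, and therefore also $(SC)^{-3}=I$. Consequently every boundary map in table \eqref{eq:boundarytable} equals the identity $I$ on $X^2$, which is the boundary map of the untwisted block.

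The plan is then to translate "boundary map equals $I$" into the bijection of colorings. A twist block is a composition of single crossings, read from top to bottom as in Figure~\ref{fig:single-crossings}. At each crossing, the colors of the two incoming (top) endpoints determine the outgoing (bottom) colors uniquely, by the displayed formulas; these formulas use the quandle operations and their inverses, so they always make sense. Hence an $X$-coloring of the block is uniquely determined by the pair of colors at its top boundary. Conversely, every top pair extends to a coloring of the block: propagate it down crossing by crossing. Because each crossing map is a bijection of $X^2$, this propagation is consistent. So colorings of the block correspond bijectively to top pairs $(x,y)$, and the induced bottom pair is the boundary map applied to $(x,y)$.

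The same description holds for the untwisted block, whose boundary map is $I$. The arcs of the diagram inside the disk meet the boundary only at the four endpoints. Thus a coloring of $D$ consists of a coloring of the exterior together with a coloring of the block whose four boundary colors agree with the exterior ones. The data needed from the block is the set of admissible quadruples (top pair, bottom pair), and this set is the graph of the boundary map. For both $D$ and $D'$ this graph is $\{(x,y,x,y)\}$, so we define the bijection by keeping the exterior colors and replacing the unique interior extension on one side by the unique one on the other. For finite $X$, equal cardinality follows immediately.

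The main obstacle is bookkeeping rather than substance. Strands may pass from top to bottom, and an arc may continue across several crossings as an over-strand. One must check that "coloring the arcs" inside the disk is the same as assigning colors to the edge segments between crossings, subject to the crossing rules. This holds because the color does not change along an over-strand, and the formulas in Figure~\ref{fig:single-crossings} already record this: the over-strand color passes through, for example $S(x,y)=(u,x)$. Similarly, the $J$-conjugation identities must be read with the orientation conventions exactly as in table \eqref{eq:boundarytable}, which we take as given from the paper.
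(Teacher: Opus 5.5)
Your proposal is correct and follows the paper's proof. You reduce every entry of \eqref{eq:boundarytable} to $I$ using $C=JS^{-1}J$ and $(SC)^3=S(CS)^3S^{-1}$, then use unique top-to-bottom propagation so that, for either block, the admissible boundary colorings form the graph $\{(x,y,x,y)\}$ of the identity, each with exactly one interior extension; your remark that bijectivity of the crossing maps makes the propagation consistent is true but not needed, since only their being well-defined functions on $X^2$ is used.
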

\begin{proof}
The identity $C=JS^{-1}J$ implies $C^3=I$, and
\[
(SC)^3=S(CS)^3S^{-1}=I.
\]
Taking inverses shows that every map in \eqref{eq:boundarytable} is $I$.

Prescribe colors $(x,y)$ at the two upper endpoints of either tangle
in the disk. At each crossing, one of the maps $S^{\pm1},C^{\pm1}$
uniquely determines the lower pair from the upper pair. Thus the colors
extend uniquely through the block, and its boundary map determines the
two lower colors. Both the twisted and untwisted blocks have boundary
map $I$. Consequently, for any colors prescribed at all four endpoints,
either block admits exactly one extension when the lower pair equals
the upper pair, and no extension otherwise.

\end{proof}

\subsection{The identities for the two targets}
For an Alexander quandle $\Aff(A,t)$, recall that
\[
x*y=tx+(1-t)y,\qquad
x*^{-1}y=t^{-1}x+(1-t^{-1})y.
\]
Substituting these operations into the four crossing maps of
Figure~\ref{fig:single-crossings}(a)--(d), in that order, gives
\begin{align*}
S(x,y)&=(y*^{-1}x,x)=\bigl((1-t^{-1})x+t^{-1}y,\ x\bigr),\\
S^{-1}(x,y)&=(y,x*y)=\bigl(y,\ tx+(1-t)y\bigr),\\
C(x,y)&=(y*x,x)=\bigl((1-t)x+ty,\ x\bigr),\\
C^{-1}(x,y)&=(y,x*^{-1}y)=\bigl(y,\ t^{-1}x+(1-t^{-1})y\bigr).
\end{align*}
We represent both the top and bottom color pairs by column vectors,
ordered from left to right. The first row of a crossing matrix
therefore consists of the coefficients of $x,y$ in the lower-left
color, and the second row gives those in the lower-right color.
For example, the formula for $S$ gives the first row
$(1-t^{-1},t^{-1})$ and the second row $(1,0)$.
Reading off the coefficients in all four formulas yields
\begin{equation}\label{eq:SC}
\begin{aligned}
 [S]&=\begin{pmatrix}1-t^{-1}&t^{-1}\\1&0\end{pmatrix}, &
 [C]&=\begin{pmatrix}1-t&t\\1&0\end{pmatrix},\\[2pt]
 [S]^{-1}&=\begin{pmatrix}0&1\\t&1-t\end{pmatrix}, &
 [C]^{-1}&=\begin{pmatrix}0&1\\t^{-1}&1-t^{-1}\end{pmatrix}.
\end{aligned}
\end{equation}
With this column-vector convention, applying $S$ first and then $C$
corresponds to multiplying by $[C][S]$, so $[C\circ S]=[C][S]$. Set
\[
u=t^{-1},\qquad
H=\begin{pmatrix}1&-1\\1&-1\end{pmatrix}.
\]
Direct multiplication gives
\begin{align}
 [S]^3-I&=(t^{-2}-t^{-1}+1)
 \begin{pmatrix}-t^{-1}&t^{-1}\\1&-1\end{pmatrix},\label{eq:S3}\\
 [D]:=[C][S]&=\begin{pmatrix}2-u&u-1\\1-u&u\end{pmatrix}
 =I+(1-u)H.\label{eq:D}
\end{align}
Since $H^2=0$, expanding the cube of $I+(1-u)H$ yields
\begin{equation}\label{eq:D3}
 [D]^3-I=3(1-u)H.
\end{equation}

\begin{lemma}\label{lem:criterion}
The crossing maps of an Alexander quandle $X=\Aff(A,t)$ satisfy
\[
\begin{aligned}
S^3=I&\quad\Longleftrightarrow\quad(t^2-t+1)A=0,\\
(CS)^3=I&\quad\Longleftrightarrow\quad3(t-1)A=0.
\end{aligned}
\]
\end{lemma}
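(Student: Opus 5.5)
The plan is to read both equivalences directly off the matrix identities \eqref{eq:S3} and \eqref{eq:D3}. Since $[S]$ and $[C][S]$ are $2\times2$ matrices with entries in $\mathbb Z[t^{\pm1}]$, they act on $A^2$. Here $t$ is an automorphism of $A$, so $t^{\pm1}$ commute with one another and with integer scalars. Thus $S^3=I$ on $A^2$ holds exactly when the operator $[S]^3-I$ annihilates $A^2$, and similarly for $(CS)^3$ with $[D]^3-I$. In each case the argument reduces to computing the image of that matrix operator.

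For the first equivalence, write $p=t^{-2}-t^{-1}+1=t^{-2}(t^2-t+1)$. Since $t^{-2}$ is an automorphism, $pA=(t^2-t+1)A$. By \eqref{eq:S3},
\[
([S]^3-I)(x,y)=p\bigl(t^{-1}(y-x),\ x-y\bigr).
\]
If $pA=0$, this vanishes identically. Conversely, suppose it vanishes for all $x,y$. Take $y=0$ and look at the second coordinate: this gives $px=0$ for every $x\in A$, so $pA=0$. Hence $S^3=I$ if and only if $(t^2-t+1)A=0$.

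For the second equivalence, use \eqref{eq:D3}. Here $[D]^3-I=3(1-u)H$ with $u=t^{-1}$, and $H(x,y)=(x-y,x-y)$. Setting $y=0$ shows that the operator vanishes on $A^2$ exactly when $3(1-t^{-1})A=0$. Since $1-t^{-1}=-t^{-1}(t-1)$ and $t^{-1}$ is invertible and commutes with $3(t-1)$, this is equivalent to $3(t-1)A=0$. The identification of $(CS)^3$ with $[D]^3$ uses the column-vector convention $[C\circ S]=[C][S]$ fixed before \eqref{eq:D}.

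The only step that needs care is confirming that the polynomial manipulations in \eqref{eq:S3}–\eqref{eq:D3} hold over a general abelian group with an automorphism, not just over a commutative ring. They do, because all entries lie in the commutative subring $\mathbb Z[t,t^{-1}]\subset\operatorname{End}(A)$. This includes the binomial expansion of $(I+(1-u)H)^3$ using $H^2=0$.
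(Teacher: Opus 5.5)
Your proposal is correct and takes essentially the same route as the paper. Both arguments read the two equivalences directly off \eqref{eq:S3} and \eqref{eq:D3}, then use the invertibility of $t$ to replace $t^{-2}-t^{-1}+1$ by $t^2-t+1$ and $3(1-t^{-1})$ by $3(t-1)$. Your $y=0$ test for the converse directions, and your remark that the matrix computations live in the commutative image of $\mathbb Z[t,t^{-1}]$ in $\operatorname{End}(A)$, simply spell out details the paper leaves implicit.
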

\begin{proof}
By \eqref{eq:S3}, all entries of $[S]^3-I$ are multiples of
$t^{-2}-t^{-1}+1$.
Thus $S^3=I$ on $A^2$ if and only if $(t^{-2}-t^{-1}+1)A=0$.
Multiplying by the automorphism $t^2$ gives the first equivalence.

By \eqref{eq:D3}, the entries of $[D]^3-I$ are
$\pm3(1-t^{-1})$. Hence $(CS)^3=I$ if and only if
$3(1-t^{-1})A=0$, which is equivalent to $3(t-1)A=0$
since $t$ is invertible on $A$.
\end{proof}

\begin{proposition}\label{prop:invariance}
The counts $N_3$ and $N_9$ are invariant under $t_3$- and $\bar t_6$-moves,
for both handedness choices and all compatible strand orientations.
\end{proposition}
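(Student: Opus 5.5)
The plan is to combine Lemma~\ref{lem:boundary} with Lemma~\ref{lem:criterion}. Both targets are Alexander quandles $\Aff(A,t)$, so by Lemma~\ref{lem:boundary} it suffices to verify $S^3=I$ and $(CS)^3=I$ on $A^2$. By Lemma~\ref{lem:criterion}, this reduces to the two annihilation conditions
\[
(t^2-t+1)A=0,\qquad 3(t-1)A=0 .
\]
Lemma~\ref{lem:boundary} already accounts for both handedness choices and all four orientation patterns of the table \eqref{eq:boundarytable}, since it shows that every entry of that table is the identity once $S^3=I$ and $(CS)^3=I$ hold.

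For $R_3=\Aff(\F,-1)$, the second condition is automatic because $3=0$ in $\F$. For the first, we have $t^2-t+1=1+1+1=3=0$ in $\F$, so it also holds.

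For $J_9=\Aff(\R,\theta)$ with $\theta=-1+\eps$, the second condition again holds because $\R$ has characteristic $3$. For the first, compute in $\R$:
\[
\theta^2=1-2\eps+\eps^2=1-2\eps ,
\]
so
\[
\theta^2-\theta+1=(1-2\eps)-(-1+\eps)+1=3-3\eps=0
\]
in $\R$. Since the automorphism is multiplication by $\theta$ on the $\R$-module $\R$, this element annihilates $A=\R$, and the first condition holds.

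Having verified both conditions for both quandles, Lemma~\ref{lem:boundary} gives a bijection of colorings across each move. Hence $N_3$ and $N_9$ are invariant. There is no real obstacle here. The only step needing care is the computation of $\theta^2-\theta+1$ in $\R$, and in particular noting that $\eps^2=0$ and $3=0$ there. A minor additional point is that Lemma~\ref{lem:criterion} is stated with $t$ acting as a group automorphism, which is consistent here since $\theta$ is a unit acting by scalar multiplication.
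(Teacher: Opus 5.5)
Your proposal is correct and follows the paper's proof exactly: you verify that $t^2-t+1$ and $3(t-1)$ vanish for $t=-1$ in $\F$ and for $t=\theta$ in $\R$, apply Lemma~\ref{lem:criterion} to obtain $S^3=I$ and $(CS)^3=I$, and conclude with Lemma~\ref{lem:boundary}. Your explicit computation $\theta^2-\theta+1=3-3\eps=0$ simply writes out the step the paper states in one line.
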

\begin{proof}
For $R_3$, the parameter is $t=-1$, so $t^2-t+1=3=0$ and
$3(t-1)=-6=0$. For $J_9$, the parameter is $t=\theta$, and both
$\theta^2-\theta+1$ and $3$ vanish in $\R$.
Lemma~\ref{lem:criterion} therefore gives $S^3=I$ and $(CS)^3=I$
for both targets. The conclusion follows from Lemma~\ref{lem:boundary}.
\end{proof}

We will also need to compare the two orientations of a knot and its
mirror image.
\begin{lemma}\label{lem:mirror}
The counts $N_3$ and $N_9$ are unchanged by taking a mirror image or by
reversing the orientations of all components simultaneously.
\end{lemma}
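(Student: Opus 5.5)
We prove each invariance by an explicit bijection of coloring sets, built from a quandle map applied to every arc, possibly combined with reversing the roles of the two rules in Figure~\ref{fig:conventions}. The key tool is that an Alexander quandle $\Aff(A,t)$ is isomorphic, via the identity map on $A$, to the quandle $(A,*^{-1})=\Aff(A,t^{-1})$. Write $X^{\mathrm{op}}$ for $X$ with $*$ and $*^{-1}$ interchanged. The argument has two steps.

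\emph{Step 1: general bookkeeping.} Examine how the crossing rules of Figure~\ref{fig:conventions} change under each operation.

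Reversing the orientation of every component reverses the over-strand at each crossing. The rule $c=a*b$ becomes $a=c*b$, with the left/right labels unchanged. So an $X$-coloring of $L$ is the same assignment of arc colors as an $X$-coloring of $-L$. I expect this to hold directly, arc for arc, since the under-strand's direction does not enter the rule. To confirm it, I will recheck the figure: the rule depends only on which side of the over-strand the incoming under-arc lies. Reversing the over-strand swaps which side counts as incoming, so relation $c=a*b$ turns into $a=c*b$. This relation is not the same as $c=a*b$. Instead it is the $X^{\mathrm{op}}$ relation $c=a*^{-1}b$. Hence $\Col_X(-L)=\Col_{X^{\mathrm{op}}}(L)$.

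For the mirror image, change every crossing. The over-strand becomes the under-strand, so the diagram changes more than in the reversal case. Use instead the reflection of the plane in a line. This reflection preserves over and under information but swaps the left and right sides of each over-strand. The rule $c=a*b$ therefore becomes $a=c*b$, and again $\Col_X(\overline L)=\Col_{X^{\mathrm{op}}}(L)$ with the same arc assignment.

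\emph{Step 2: identify $X^{\mathrm{op}}$ with $X$ for both targets.} We have $X^{\mathrm{op}}=\Aff(A,t^{-1})$, so it suffices to find an abelian-group automorphism $\phi$ of $A$ with $\phi(tx)=t^{-1}\phi(x)$.

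For $R_3$ we have $t=-1=t^{-1}$, so $\phi=\mathrm{id}$ works.

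For $J_9$ we have $t^{-1}=\theta^{-1}=-1-\eps$. Take the ring automorphism $\phi(a+\eps b)=a-\eps b$ of $\R$. This map is multiplicative because $\eps^2=0$, and it sends $\theta$ to $-1-\eps=\theta^{-1}$. Then $\phi(\theta x)=\phi(\theta)\phi(x)=\theta^{-1}\phi(x)$, so $\phi$ is a quandle isomorphism $J_9\to J_9^{\mathrm{op}}$.

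Applying $\phi$ arc by arc gives bijections $\Col_X(L)\cong\Col_{X^{\mathrm{op}}}(L)$. Combined with Step 1, this yields $N(-L)=N(L)$ and $N(\overline L)=N(L)$ for both counts. The case of mirroring and reversing together follows by composing the two.

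\emph{Main obstacle.} The delicate point is Step 1: getting the sign conventions of Figure~\ref{fig:conventions} right, so that each operation really converts $*$ into $*^{-1}$. In particular one must not conclude that orientation reversal acts trivially. If the check instead shows that some operation preserves the rule outright, that case is immediate and only the other one needs $\phi$. The proof then goes through either way.
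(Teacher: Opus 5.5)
Your proposal is correct and follows the paper's route: both arguments rest on the duality $c_X(rL)=c_X(mL)=c_{X^*}(L)$, and both then show that $R_3$ and $J_9$ are self-dual via the identity and the involution $a+\eps b\mapsto a-\eps b$ (which sends $\theta$ to $\theta^{-1}$); the one difference is that you derive the duality directly from the local crossing rule, which covers links at no extra cost, whereas the paper cites Lemma~3.2 of~\cite{CESY} for knots. In a write-up, drop your provisional claim that colorings of $L$ and $-L$ coincide under the same rule, and rephrase your opening sentence to say that the dual of $\Aff(A,t)$ is $\Aff(A,t^{-1})$ on the same set, which is not in general isomorphic to $\Aff(A,t)$ via the identity.
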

\begin{proof}
For an oriented link $L$, write $mL$ for its mirror image, $rL$ for
the result of reversing all component orientations, and
$c_X(L)=|\Col_X(L)|$. For a finite quandle $X$ with dual
$X^*=(X,*^{-1})$, the standard duality relation is
\[
c_X(rL)=c_X(mL)=c_{X^*}(L).
\]
For knots, this is Lemma~3.2 of~\cite{CESY}; the same local coloring relations
give this identity for links. It therefore suffices to show that
$R_3$ and $J_9$ are each isomorphic to their duals.

For $R_3$, the operations $*$ and $*^{-1}$ coincide, so the identity
map is such an isomorphism. For $J_9$, the ring involution
\[
\alpha(a+\eps b)=a-\eps b
\]
satisfies $\alpha(\theta)=\theta^{-1}$ and hence
\[
\alpha(x*y)
=\theta^{-1}\alpha(x)+(1-\theta^{-1})\alpha(y)
=\alpha(x)*^{-1}\alpha(y).
\]
Thus $\alpha$ is an isomorphism from $J_9$ to its dual. Applying the
duality relation to these two targets proves both assertions.
\end{proof}

\section{The knot \texorpdfstring{$9_{46}$}{9(46)} and two link examples}\label{sec:knot}\label{sec:example}
We prove Theorem~\ref{thm:A} by computing the two coloring counts of
$9_{46}$ and reducing its unoriented pretzel diagram to $U_3$.

\subsection{The two coloring counts}
We use the four-strand braid
\begin{equation}\label{eq:braid}
\beta=\sigma_1^{-1}\sigma_2\sigma_1^{-1}\sigma_2\sigma_3^{-1}
\sigma_2^{-1}\sigma_1\sigma_2^{-1}\sigma_3^{-1},
\end{equation}
whose closure represents $9_{46}$ up to mirror image \cite{Atlas}.
All four strands are oriented downwards. We take $\sigma_i$ to be the
left-over crossing in Figure~\ref{fig:single-crossings}(a), acting on
positions $i,i+1$, and read the word from top to bottom.
Different braid conventions may give the mirror image.
By Lemma~\ref{lem:mirror}, the coloring counts are the same.
The pretzel representation $P(3,3,-3)$ of the
same knot is also recorded in \cite{DS}.

\begin{proposition}\label{prop:knotcounts}
For either orientation of $9_{46}$ and of its mirror image,
\begin{equation}\label{eq:knotcounts}
N_3(9_{46})=27,\qquad N_9(9_{46})=81,\qquad\delta(9_{46})=2.
\end{equation}
\end{proposition}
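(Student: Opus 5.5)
By Lemma~\ref{lem:mirror}, both counts are unchanged by reversing the orientation and by passing to the mirror image. It therefore suffices to compute \(N_3\) and \(N_9\) for the closure of the braid \(\beta\) in \eqref{eq:braid}, with all strands oriented downwards. The value \(\delta(9_{46})=2\) then follows from \eqref{eq:introdelta}: \(2\cdot 3-4=2\).

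The plan is to work with the linear boundary maps of the braid. For an Alexander quandle \(\Aff(A,t)\), each generator \(\sigma_i^{\pm1}\) acts on \(A^4\) by the matrix \([S]^{\pm1}\) from \eqref{eq:SC} in positions \(i,i+1\) and by the identity elsewhere. Let \(M_t\) be the product of these matrices in the order of the word, with the first letter acting first. Colorings of the closure are exactly the vectors \(x\in A^4\) with \(M_tx=x\). So \(\Col(\widehat\beta)\cong\ker(M_t-I)\), and since every Alexander quandle coloring is affine, this kernel contains the diagonal of constant colorings.

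For \(R_3\) we take \(t=-1\) over \(\F\), so \([S]=\begin{pmatrix}2&-1\\1&0\end{pmatrix}\) and \([S]^{-1}=\begin{pmatrix}0&1\\-1&2\end{pmatrix}\) modulo \(3\). We multiply out the nine \(4\times4\) matrices and show that \(M_{-1}-I\) has rank \(1\) over \(\F\). Its kernel then has dimension \(3\), giving \(N_3=27\). As a cross-check, \(9_{46}=P(3,3,-3)\) has determinant \(9\), and the Fox colorings modulo \(3\) of the pretzel diagram should form a space of dimension \(1+2=3\).

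For \(J_9\) we work over \(\R\) with \(t=\theta\). The kernel of \(M_\theta-I\) on \(\R^4\) is an \(\R\)-module, and \(N_9\) is its cardinality. Writing \(M_\theta=M_0+\eps M_1\), we have \(M_0=M_{-1}\), and a vector \(x+\eps y\) is a solution exactly when
\[
(M_0-I)x=0,\qquad (M_0-I)y+M_1x=0.
\]
The count is therefore \(3^{\dim\ker(M_0-I)}\) multiplied by the number of \(y\) over each liftable \(x\). In other words, \(N_9=27\cdot 27\cdot 3^{-r}\), where \(r\) is the codimension of the liftable subspace. We must show that this subspace has dimension \(1\), so that \(r=2\) and \(N_9=3^4=81\). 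Concretely, this means computing \(M_1\) and checking that \(M_1x\in\im(M_0-I)\) only for constant \(x\).

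The main obstacle is this last computation. It means tracking the \(\eps\)-linear parts of \([S]^{\pm1}\) through the nine-letter product, where \(t^{-1}=-1-\eps\) and \(1-t^{\pm1}=2\mp\eps\). If this proves error-prone, I would instead treat \(\R^4\) as \(\F^8\) and compute \(\dim_{\F}\ker(M_\theta-I)=4\) by row reduction of an \(8\times8\) matrix over \(\F\). Alternatively, I would use the pretzel diagram \(P(3,3,-3)\), where the twist-block relations from Lemma~\ref{lem:criterion} reduce the count to a small linear system on the colors of the six boundary arcs.
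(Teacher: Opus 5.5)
Your plan is the paper's proof. It reduces by Lemma~\ref{lem:mirror} to the downward-oriented closure of $\beta$ and writes the composite crossing matrix as $B=I_4+P_0+\eps P_1$ (your $M_0-I$ and $M_1$). It then reads off $N_3=27$ from $\rank P_0=1$, and counts $J_9$-colorings as pairs $(v,a)$ with $P_0v=0$ and $P_0a=-P_1v$, exactly as you describe.

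What your proposal still lacks is the nine-letter product itself, and since this proposition is purely numerical, that product is the actual content of the proof. The paper carries it out. Every row of $P_0$ is $(1,2,1,2)$, and $P_1$ has rows $(0,1,0,2)$, $(1,0,0,2)$, $(1,1,2,2)$, $(1,1,0,1)$. So $P_1v\in\im P_0$ means all entries of $P_1v$ are equal, which forces $v_1=v_2=v_3=v_4$. This confirms that the liftable subspace is the line of constants, and hence $N_9=3\cdot27=81$.
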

\begin{proof}
By the crossing formulas, $\sigma_i$ sends $(x,y)$ to $(y*^{-1}x,x)$,
whereas $\sigma_i^{-1}$ sends $(x,y)$ to $(y,x*y)$.
Each top tuple extends uniquely through the braid. Closing the braid
identifies each lower endpoint with the corresponding upper endpoint,
so
\[
\Col_X(\widehat\beta)
\cong\{x\in X^4:\mathcal B_{\beta,X}(x)=x\},
\]
where $\mathcal B_{\beta,X}$ denotes the composite crossing map.

To compute this map, put
\[
f(x,y)=tx+(1-t)y,\qquad
 g(x,y)=t^{-1}x+(1-t^{-1})y.
\]
The nine successive updates are as follows:
\begin{center}\small
\begin{tabular}{c|c|l|l}
step&letter&new color&tuple below\\ \hline
0&---&---&$(x_1,x_2,x_3,x_4)$\\
1&$\sigma_1^{-1}$&$u_1=f(x_1,x_2)$&$(x_2,u_1,x_3,x_4)$\\
2&$\sigma_2$&$u_2=g(x_3,u_1)$&$(x_2,u_2,u_1,x_4)$\\
3&$\sigma_1^{-1}$&$u_3=f(x_2,u_2)$&$(u_2,u_3,u_1,x_4)$\\
4&$\sigma_2$&$u_4=g(u_1,u_3)$&$(u_2,u_4,u_3,x_4)$\\
5&$\sigma_3^{-1}$&$u_5=f(u_3,x_4)$&$(u_2,u_4,x_4,u_5)$\\
6&$\sigma_2^{-1}$&$u_6=f(u_4,x_4)$&$(u_2,x_4,u_6,u_5)$\\
7&$\sigma_1$&$u_7=g(x_4,u_2)$&$(u_7,u_2,u_6,u_5)$\\
8&$\sigma_2^{-1}$&$u_8=f(u_2,u_6)$&$(u_7,u_6,u_8,u_5)$\\
9&$\sigma_3^{-1}$&$u_9=f(u_8,u_5)$&$(u_7,u_6,u_5,u_9)$
\end{tabular}
\end{center}
The closure equations are therefore
\[
u_7=x_1,\qquad u_6=x_2,\qquad u_5=x_3,\qquad u_9=x_4.
\]
The braid $\beta$ has four strands, so its colors form a column
$(x_1,x_2,x_3,x_4)^{\tr}$. We use the $2\times2$ crossing matrix $[S]$
from \eqref{eq:SC} to define three $4\times4$ matrices:
\[
S_1=[S]\oplus I_2,\qquad
S_2=(1)\oplus[S]\oplus(1),\qquad
S_3=I_2\oplus[S].
\]
Here $\oplus$ denotes block diagonal sum, and $I_2$ is the $2\times2$
identity matrix. Thus $S_i$ acts on entries $i,i+1$ and leaves the
other two entries unchanged.
The matrix for $\sigma_i^{-1}$ is $S_i^{-1}$.
Applying the nine crossings in order gives
\[
B=S_3^{-1}S_2^{-1}S_1S_2^{-1}S_3^{-1}S_2S_1^{-1}S_2S_1^{-1},
\]
where the first crossing gives the rightmost factor because the colors
are column vectors. Substitution of $t=\theta$ gives
$B-I_4=P_0+\eps P_1$, with
\[
 P_0=\begin{pmatrix}1&2&1&2\\1&2&1&2\\1&2&1&2\\1&2&1&2\end{pmatrix},\qquad
 P_1=\begin{pmatrix}0&1&0&2\\1&0&0&2\\1&1&2&2\\1&1&0&1\end{pmatrix}.
\]

At $t=-1$, the fixed-point equation is simply $P_0v=0$ over $\F$.
Since all rows of $P_0$ are equal and nonzero, it reduces to
\[
v_1-v_2+v_3-v_4=0.
\]
There are three free coordinates, so $N_3(9_{46})=3^3=27$.

For $J_9$, write a top tuple uniquely as $x=v+\eps a$, with
$v,a\in\F^4$. Equating the two coefficients in $(B-I_4)x=0$ gives
\[
P_0v=0,\qquad P_0a+P_1v=0.
\]
Set
\[
s=a_1+2a_2+a_3+2a_4\in\F.
\]
Since the four rows of $P_0$ are identical,
\[
P_0a=(s,s,s,s)^{\tr}.
\]
Using the displayed matrix $P_1$, the equation $P_0a+P_1v=0$ becomes
\[
\begin{cases}
s+v_2+2v_4=0,\\
s+v_1+2v_4=0,\\
s+v_1+v_2+2v_3+2v_4=0,\\
s+v_1+v_2+v_4=0.
\end{cases}
\]
Over $\F$, $2=-1$ and $-2=1$. Subtracting consecutive equations gives
\[
\begin{aligned}
\text{second minus first:}\quad
&v_1-v_2=0 \quad\Longrightarrow\quad v_1=v_2,\\
\text{third minus second:}\quad
&v_2+2v_3=0 \quad\Longrightarrow\quad v_2=v_3,\\
\text{fourth minus third:}\quad
&-2v_3-v_4=0 \quad\Longrightarrow\quad v_3=v_4.
\end{aligned}
\]
Therefore
\[
v_1=v_2=v_3=v_4.
\]
Let $c\in\F$ denote their common value.
For $v=c(1,1,1,1)^{\tr}$, direct substitution gives $P_0v=P_1v=0$.
The equation $P_0a+P_1v=0$ therefore reduces to $P_0a=0$. Each row gives
\[
a_1+2a_2+a_3+2a_4=0,
\qquad\text{or}\qquad a_1=a_2-a_3+a_4.
\]
The parameters $c,a_2,a_3,a_4\in\F$ are independent, so
\[
N_9(9_{46})=3^4=81.
\]
Thus
\[
\delta(9_{46})=2\log_3 N_3(9_{46})-\log_3 N_9(9_{46})
=2\cdot3-4=2.
\]
By Lemma~\ref{lem:mirror}, the same values hold for both orientations
of $9_{46}$ and its mirror image.
\end{proof}

\subsection{Oriented obstruction and unoriented reduction}
The two counts must be used together: either count alone agrees with
that of an unlink, but they do not agree with the same unlink.

\begin{corollary}\label{cor:separation}
The knot $9_{46}$ cannot be transformed into any unlink by oriented
$t_3$- and $\bar t_6$-moves. After its orientation is forgotten,
three $3$-moves transform it into $U_3$.
\end{corollary}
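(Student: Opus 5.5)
The corollary has two parts, and I will treat them separately.

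\emph{Oriented obstruction.} Suppose, for a contradiction, that some orientation of $9_{46}$ satisfies $9_{46}\sim U_m$ for some $m\geq1$. By Proposition~\ref{prop:invariance}, the counts $N_3$ and $N_9$ are invariant under $t_3$- and $\bar t_6$-moves. They are also invariant under oriented isotopy, since they are invariants of the oriented link. Hence
\[
N_3(9_{46})=N_3(U_m), \qquad N_9(9_{46})=N_9(U_m).
\]
Proposition~\ref{prop:knotcounts} and \eqref{eq:unlink} then give $27=3^m$ and $81=9^m$. The first equation forces $m=3$, while the second forces $m=2$, which is a contradiction. Equivalently, $\delta(9_{46})=2\neq0=\delta(U_m)$. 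Proposition~\ref{prop:knotcounts} covers both orientations and the mirror image, so the same argument applies to each of them.

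\emph{Unoriented reduction.} I will use the pretzel diagram $P(3,3,-3)$ of $9_{46}$ recorded in \cite{DS}. Each tassel consists of three half-twists between two arcs. An unoriented $3$-move replaces such a block of three half-twists by two parallel untwisted arcs. The replacement is chosen so that the trivial tangle joins the two top endpoints to the two bottom endpoints, in the same way as in the $3$-move picture.

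Performing this replacement in all three tassels gives three $3$-moves. The resulting diagram is a pretzel diagram $P(0,0,0)$. In a pretzel link, each tassel connects the top and bottom horizontal arcs. With zero twists, every tassel becomes two vertical arcs, and the diagram closes up into crossingless circles. Counting components: the top and bottom arcs are cut by the three columns, and each of the three gaps between consecutive tassels (cyclically) closes into a separate circle. This gives three disjoint circles, that is, $U_3$.

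\emph{Main obstacle.} The only real work is checking that the $3$-move in each tassel is oriented the right way. The vertical replacement must be used, so that no horizontal smoothing occurs. I will also check the component count by tracing the diagram once, rather than relying on the pattern of $P(0,\dots,0)$ with $n$ columns giving $U_n$.

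As a consistency check, the count $N_3=27=3^3$ agrees with $N_3(U_3)$. This is as it must be, because $N_3$ is invariant under unoriented $3$-moves: Fox colorings are orientation-independent, and Lemma~\ref{lem:boundary} applies since $S^3=I$ for $R_3$.
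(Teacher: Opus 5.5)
Your proposal is correct and takes the same route as the paper's proof. The pair $(N_3,N_9)=(27,81)$ cannot equal $(3^m,9^m)$ for any single $m$, and one ordinary $3$-move in each twist box of $P(3,3,-3)$ leaves the crossingless diagram $P(0,0,0)$, which consists of three circles. (Your closing consistency check is not needed. Strictly, Lemma~\ref{lem:boundary} covers only the oriented moves, but for $R_3$ one has $S=C$, so a three-half-twist block acts by $S^{\pm3}=I$ for either relative orientation of the two strands.)
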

\begin{proof}[Proof of Corollary~\ref{cor:separation} and Theorem~\ref{thm:A}]
If $9_{46}\sim U_m$, Proposition~\ref{prop:invariance} and
\eqref{eq:unlink} would give
\[
27=3^m,\qquad81=9^m.
\]
The first equality forces $m=3$, while the second then gives the
contradiction $81=9^3=729$.

For the unoriented reduction, use the standard pretzel diagram
$P(3,3,-3)$; see \cite{DS} and Figure~\ref{fig:pretzel}.
After forgetting orientation, apply one ordinary $3$-move in each of
the three twist boxes. This removes all crossings and leaves three
disjoint circles, representing $U_3$.
By Lemma~\ref{lem:mirror}, the same coloring counts give the obstruction
for either orientation and for the mirror image.
The ordinary $3$-moves do not depend on orientation, and reflecting
them gives the reduction of the mirror knot to $U_3$.
\end{proof}

\begin{figure}[htbp]
\centering
\begin{tikzpicture}[x=1.15cm,y=0.85cm,line width=0.8pt,line cap=round]
  % Left: P(3,3,-3) with antiparallel top directions.
  \begin{scope}
    \foreach \cx/\hand in {0/1,1.2/1,2.4/-1}{
      \foreach \ytop in {2.4,1.6,0.8}{
        \ifnum\hand=1
          \draw (\cx+0.24,\ytop)--(\cx-0.24,\ytop-0.8);
          \draw[preaction={draw=white,line width=4pt}]
            (\cx-0.24,\ytop)--(\cx+0.24,\ytop-0.8);
        \else
          \draw (\cx-0.24,\ytop)--(\cx+0.24,\ytop-0.8);
          \draw[preaction={draw=white,line width=4pt}]
            (\cx+0.24,\ytop)--(\cx-0.24,\ytop-0.8);
        \fi
      }
      \draw (\cx-0.24,2.4)--(\cx-0.24,2.75);\draw (\cx+0.24,2.4)--(\cx+0.24,2.75);
      \draw (\cx-0.24,0)--(\cx-0.24,-0.35);\draw (\cx+0.24,0)--(\cx+0.24,-0.35);
      \draw[-{Stealth[length=1.8mm]}] (\cx-0.24,2.75)--(\cx-0.24,2.5);
      \draw[-{Stealth[length=1.8mm]}] (\cx+0.24,2.5)--(\cx+0.24,2.75);
      \draw[densely dashed,gray,line width=0.35pt]
        (\cx-0.37,0.05) rectangle (\cx+0.37,2.35);
    }
    \foreach \cx in {0,1.2}{
      \draw (\cx+0.24,2.75) .. controls (\cx+0.24,3.15) and
        (\cx+0.96,3.15) .. (\cx+0.96,2.75);
      \draw (\cx+0.24,-0.35) .. controls (\cx+0.24,-0.75) and
        (\cx+0.96,-0.75) .. (\cx+0.96,-0.35);
    }
    \draw (-0.24,2.75) .. controls (-0.24,4.7) and
      (2.64,4.7) .. (2.64,2.75);
    \draw (-0.24,-0.35) .. controls (-0.24,-2.3) and
      (2.64,-2.3) .. (2.64,-0.35);
    \node[align=center] at (1.2,-2.85)
      {$P(3,3,-3)$\\[-1pt]{\footnotesize up to mirror image}};
  \end{scope}
  \draw[-{Stealth[length=2mm]}] (3.05,1.2)--(4.85,1.2);
  \node[align=center,font=\footnotesize,anchor=south] at (3.95,1.45)
    {one ordinary\\$3$-move in\\each box};
  % Right: the crossingless diagram P(0,0,0) = U_3.
  \begin{scope}[xshift=6cm]
    \foreach \cx in {0,1.2,2.4}{
      \draw (\cx-0.24,2.75)--(\cx-0.24,-0.35);
      \draw (\cx+0.24,2.75)--(\cx+0.24,-0.35);
    }
    \foreach \cx in {0,1.2}{
      \draw (\cx+0.24,2.75) .. controls (\cx+0.24,3.15) and
        (\cx+0.96,3.15) .. (\cx+0.96,2.75);
      \draw (\cx+0.24,-0.35) .. controls (\cx+0.24,-0.75) and
        (\cx+0.96,-0.75) .. (\cx+0.96,-0.35);
    }
    \draw (-0.24,2.75) .. controls (-0.24,4.7) and
      (2.64,4.7) .. (2.64,2.75);
    \draw (-0.24,-0.35) .. controls (-0.24,-2.3) and
      (2.64,-2.3) .. (2.64,-0.35);
    \node at (1.2,-2.85) {$P(0,0,0)=U_3$};
  \end{scope}
\end{tikzpicture}
\caption{Left: the pretzel diagram $P(3,3,-3)$. Right:
an ordinary $3$-move in each box removes the three twist boxes and leaves three circles.}
\label{fig:pretzel}
\end{figure}

\begin{remark}\label{rem:burnside}
By Proposition~4 of~\cite{DP}, the third Burnside group $B_L(3)$
is invariant under $3$-moves. The unoriented reduction above therefore gives
\[
B_{9_{46}}(3)\cong B_{U_3}(3).
\]
By contrast, $N_9(9_{46})=81$ and $N_9(U_3)=729$.
Thus $N_9$, and likewise $\delta$, is not an invariant of unoriented
$3$-moves. The example separates the obstruction for the oriented moves
from the Burnside obstruction for unoriented $3$-moves.
\end{remark}

\subsection{Two link counterexamples}\label{subsec:DP-links}
By Theorem~6 of D\k{a}bkowski and Przytycki~\cite{DP}, the links
$\widehat\gamma$ and $L_{2BR}$ cannot be transformed into any unlink
by ordinary $3$-moves. We now compute their coloring counts
and lifting defects.

The first link $\widehat\gamma$ is the closure of the five-strand braid
\begin{equation}\label{eq:DP-gamma}
\gamma=(\sigma_1\sigma_2\sigma_3\sigma_4)^{10}.
\end{equation}
The second link is $L_{2BR}=\widehat{\beta}_{2BR}$, where
\begin{equation}\label{eq:DP-cable}
\beta_{2BR}=
(\sigma_2\sigma_1\sigma_3\sigma_2
 \sigma_4^{-1}\sigma_3^{-1}\sigma_5^{-1}\sigma_4^{-1})^3.
\end{equation}
This six-strand braid doubles each strand of the Borromean braid
$(\sigma_1\sigma_2^{-1})^3$; see Example~6.2 of~\cite{FK}.
The two four-letter blocks replace the positive and negative crossings,
respectively, by their blackboard two-parallels.
Every crossing of the original closed braid is between distinct
components, so each component has self-writhe zero.
Its blackboard parallel therefore has zero framing, and the closure
of $\beta_{2BR}$ is the zero-framed two-parallel of the Borromean rings.
The two links have five and six components, respectively.
All braid strands are oriented downwards.

\begin{proposition}\label{prop:DP-counts}
For these downward braid orientations, the two links have
\[
\begin{array}{c|c|c|c}
L&N_3(L)&N_9(L)&\delta(L)\\ \hline
\widehat\gamma&3^5&3^6&4\\
L_{2BR}&3^6&3^{10}&2
\end{array}.
\]
Neither oriented link can be transformed into any unlink by
$t_3$- and $\bar t_6$-moves.
\end{proposition}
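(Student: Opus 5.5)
The plan is to compute both counts for each braid closure by the fixed-point method of Proposition~\ref{prop:knotcounts}. With all strands oriented downwards, every $\sigma_i^{\pm1}$ acts through the matrix $S_i^{\pm1}$, built from $[S]$ in \eqref{eq:SC} exactly as before. The coloring sets are
\[
\Col_X(\widehat\beta)\cong\ker(B-I)
\]
over $\F$ (for $t=-1$) and over $\R$ (for $t=\theta$), where $B$ is the product of the crossing matrices. Each braid is a power of a short block, so I would first compute the block matrix $M$ for $\sigma_1\sigma_2\sigma_3\sigma_4$ (respectively for the eight-letter block of $\beta_{2BR}$) over $\R$. Then $B=M^{10}$, respectively $B=M^3$.

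I would write $M=M_0+\eps M_1$, where $M_0$ is the $R_3$ matrix. Then
\[
B=M_0^k+\eps\sum_{j=0}^{k-1} M_0^{j}M_1M_0^{k-1-j},
\]
and I set $B-I=P_0+\eps P_1$ as in the $9_{46}$ computation.

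For $\widehat\gamma$, the braid $\sigma_1\cdots\sigma_4$ is the cyclic shift, twisted by the quandle operations, so $M_0$ has order dividing $10$ on the relevant quotient. I expect $M_0^{10}=I$, which gives $N_3=3^5$, consistent with five components and no nonconstant constraints mod $3$. The $\eps$-part then reduces, as in Section~\ref{sec:knot}, to the linear system
\[
P_1v+P_0a=0,\qquad P_0=0.
\]
This means $\{v:P_1v=0\}$ must be computed. The count $N_9=3^{5+\dim\ker P_1}$ should be $3^6$, so $\rank P_1=4$. That gives
\[
\delta=2\cdot5-6=4.
\]

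For $L_{2BR}$, the analogous computation with $M^3$ should give $P_0$ of rank $0$ ($N_3=3^6$). The system $P_0a+P_1v=0$ then yields $N_9=3^{6+(6-\rank P_1)}$ with $\rank P_1=2$, so $N_9=3^{10}$ and $\delta=2$. If $P_0\ne0$ in either case, I would instead solve the full system $P_0v=0$, $P_0a+P_1v=0$ as in Proposition~\ref{prop:knotcounts}, counting pairs $(v,a)$.

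The non-reducibility then follows as in Corollary~\ref{cor:separation}. Suppose a link were $\sim U_m$. Proposition~\ref{prop:invariance} and \eqref{eq:unlink} force $m=\log_3N_3$ and $\log_3N_9=2m$, that is, $\delta=0$. This contradicts $\delta=4$ and $\delta=2$.

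The main obstacle is the bookkeeping in computing the $\eps$-parts $P_1$ of these large powers, especially the sum of ten terms for $\gamma$. To control it, I would exploit the structure: on the constant vector $\mathbf 1$ both $M_0$ and $M_1$ act trivially, so I can work modulo constants. For $\gamma$, I would diagonalize, or at least find the Jordan form of, the shift-like $M_0$ over $\F$ to sum the geometric-type series. In practice the safest course is a direct, carefully tabulated machine-style computation, presented as in the table for $9_{46}$.
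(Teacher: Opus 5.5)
The framework you set up is the paper's. You treat closure colorings as fixed points of the braid matrix over $\F$ and over $\R$, show $B=I_n+\eps M$ (your $P_0=0$, $P_1=M$), and deduce $N_3=3^n$, $N_9=3^{2n-\rank M}$, $\delta=\rank M$. Your last step, excluding unlinks via $\delta=0$, is fine.

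The gap is that the facts carrying the whole proposition are never established. That $P_0=0$ is only ``expected''. The ranks of $P_1$ are not computed but inferred from the values you are supposed to prove (``$N_9$ should be $3^6$, so $\rank P_1=4$''), which is circular. Your heuristic for $M_0^{10}=I$ also does not suffice. Knowing that $M_0^{10}$ acts trivially on the quotient by constants still allows a rank-one term $\mathbf 1w^{\tr}$. For example, on two strands $[S]^2$ acts trivially on $\F^2/\F\mathbf 1$ at $t=-1$, yet $[S]^2\neq I$, and the Hopf link has $N_3=3$. The paper's proof records exactly the missing outcome: $B_\gamma=I_5+\eps(I_5-uw^{\tr})$ with $u=\mathbf 1$, $w=(1,-1,1,-1,1)^{\tr}$, and the explicit matrix $M_{2BR}$ with kernel $v_1-v_2=v_3-v_4=v_5-v_6$. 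The ranks $4$ and $2$ are read off from these.

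For $\gamma$ you can close the gap without the ten-term sum, as follows.
\begin{itemize}
\item The matrix $[S]$ is the unreduced Burau matrix with parameter $s=t^{-1}$. Hence every braid matrix $B$ satisfies $B\mathbf 1=\mathbf 1$ and $rB=r$, where $r=(1,s,s^2,s^3,s^4)$.
\item Take the full twist $\Delta^2=(\sigma_1\sigma_2\sigma_3\sigma_4)^5$. Its reversed word is again $\Delta^2$, so the right-to-left product convention is harmless. Over $\mathbb Q(s)$, $\Delta^2$ acts on $\ker r$, the reduced Burau summand, by the scalar $s^5$.
\item Therefore $[\Delta^2]-s^5I$ factors through $r$. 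Evaluating on $\mathbf 1$ gives $[\Delta^2]=aI+N$ with $a=s^5$ and $N=(1-s)\mathbf 1r$. This is an identity over $\mathbb Z[s^{\pm1}]$, so it specializes to $\R$.
\item Since $N^2=(1-a)N$, you get $B_\gamma=[\Delta^2]^2=a^2I+(1+a)N$.
\item At $s=\theta^{-1}=-1-\eps$ one has $a=-1+\eps$, $a^2=1+\eps$, $1+a=\eps$, and $(1-s)r\equiv-w^{\tr}$ modulo $\eps$. This yields exactly the paper's $B_\gamma$, and the rank $4$ follows because $w^{\tr}\mathbf 1=1$.
\end{itemize}
For $L_{2BR}$ your proposal offers no comparable shortcut. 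The $6\times6$ product must actually be computed, or the paper's $M_{2BR}$ checked, before you may assert $P_0=0$ and $\rank P_1=2$.
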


\begin{proof}
Use the same matrix calculation as in Proposition~\ref{prop:knotcounts},
with five and six strands. It gives
\[
B_\gamma=I_5+\eps M_\gamma,\qquad
B_{2BR}=I_6+\eps M_{2BR},
\]
where
\begin{equation}\label{eq:DP-gamma-matrix}
M_\gamma=I_5-uw^{\tr},\qquad
u=(1,1,1,1,1)^{\tr},\quad w=(1,-1,1,-1,1)^{\tr},
\end{equation}
and
\begin{equation}\label{eq:DP-cable-matrix}
M_{2BR}=
\begin{pmatrix}
0&0&2&1&1&2\\
0&0&2&1&1&2\\
1&2&0&0&2&1\\
1&2&0&0&2&1\\
2&1&1&2&0&0\\
2&1&1&2&0&0
\end{pmatrix}.
\end{equation}
The kernel of $M_\gamma$ consists of constant columns.
The kernel of $M_{2BR}$ is given by
$v_1-v_2=v_3-v_4=v_5-v_6$.
Thus their ranks over $\F$ are $4$ and $2$, respectively.

For $B=I_n+\eps M$, write each top color column uniquely as
$x=v+\eps a$, where $v,a\in\F^n$. Since $\eps^2=0$,
\[
Bx=x+\eps Mv,\qquad Bx=x\ \Longleftrightarrow\ Mv=0.
\]
If $r=\rank M$, then $\ker M$ has dimension $n-r$.
Thus there are $3^{n-r}$ choices for $v$ and $3^n$ choices for $a$.
Reducing $B$ modulo $\eps$ gives $I_n$, so every column in $\F^n$
defines an $R_3$-coloring of the closure. Hence
\begin{equation}\label{eq:DP-rank-counts}
N_3=3^n,\qquad N_9=3^{n-r}\cdot3^n=3^{2n-r},\qquad\delta=r.
\end{equation}
Taking $(n,r)=(5,4)$ and $(6,2)$ gives the table.
The counts for $\gamma$ also follow from the Alexander--Burau matrix
for $\gamma^{-1}$ computed by Przytycki and Tsukamoto \cite{PT}.
Since every unlink has $\delta=0$, invariance of $\delta$ under
$t_3$- and $\bar t_6$-moves proves the last assertion.
\end{proof}

\begin{remark}\label{rem:DP-orientations}
The orientations in Proposition~\ref{prop:DP-counts} matter.
Number the components by their top endpoints from left to right.
Write $+$ for downward orientation and $-$ for upward orientation.
For the five-strand braid $\gamma$ in \eqref{eq:DP-gamma},
the orientation $(-,+,+,+,+)$ gives $B_\gamma=I_5$.
For the six-strand braid $\beta_{2BR}$ in \eqref{eq:DP-cable},
the orientation $(+,-,+,-,+,-)$ gives $B_{2BR}=I_6$.
Thus, for these orientations,
\[
\begin{aligned}
N_3(\widehat\gamma)&=3^5, & N_9(\widehat\gamma)&=3^{10},\\
N_3(L_{2BR})&=3^6, & N_9(L_{2BR})&=3^{12}.
\end{aligned}
\]
Both lifting defects are zero.
Our two counts do not exclude unlinks in these cases.
The Burnside obstruction in~\cite{DP} still excludes every unlink.
\end{remark}

\section{The lifting defect}\label{sec:lifting}\label{sec:defect}
Recall that reduction $\rho(a+\eps b)=a$ sends each $J_9$-coloring
to an $R_3$-coloring. We determine which $R_3$-colorings lift and how
many lifts they have. This gives a rank formula for $\delta(L)$.
All vector spaces, dimensions, and ranks in this section are over
$\F$.

\subsection{The equations for a lift}
Let $D$ be an oriented diagram of a link $L$, with $q$ arcs and $p$
crossings.
Number the arcs and write $x_1,\ldots,x_q$ for their colors.
At a crossing, let $j$ label the over-arc and choose the under-arc
labels $i,k$ according to Figure~\ref{fig:conventions}: take
$(x_i,x_j,x_k)=(a,b,c)$ in the left panel and
$(x_i,x_j,x_k)=(c,b,a)$ in the right panel.
In both cases, the coloring relation is
\begin{equation}\label{eq:crossing-row}
 x_k=x_i*x_j=t x_i+(1-t)x_j,
 \qquad\text{equivalently}\qquad
 x_k-tx_i-(1-t)x_j=0.
\end{equation}

Write the crossing equations as a linear system in $x_1,\ldots,x_q$.
For example, if $q=4$, a crossing with $(i,j,k)=(1,2,3)$ gives
\[
 -tx_1+(t-1)x_2+x_3=0.
\]
Its coefficients, in the order $x_1,x_2,x_3,x_4$, form the row
$(-t,t-1,1,0)$. The last entry is zero because $x_4$ does not occur.
Each crossing gives one row in this way.
If the same variable occurs more than once, combine its coefficients.
Putting these $p$ rows together gives the $p\times q$ matrix
\[
 F(t)\in\operatorname{Mat}_{p\times q}(\mathbb Z[t,t^{-1}]).
\]
For an Alexander quandle $\Aff(M,t)$, with $M$ a
$\mathbb Z[t,t^{-1}]$-module, a color vector $x\in M^q$ is a coloring
exactly when $F(t)x=0$. The matrix need not be square.
Each row sums to zero, so every constant color vector is a solution.

Reduce the coefficients of $F(t)$ modulo $3$, and keep the notation $F(t)$.
Set
\begin{equation}\label{eq:diagramF}
 F_0=F(-1),\qquad
 F_1=F'(-1)
 \quad\text{in }\operatorname{Mat}_{p\times q}(\F),
\end{equation}
where $F'(t)$ is obtained by differentiating each entry of $F(t)$.
Each entry has the form $f(t)=\alpha+\beta t$, with
$\alpha,\beta\in\F$. Substituting $\theta=-1+\eps$ gives
\[
 f(\theta)=\alpha+\beta(-1+\eps)
          =(\alpha-\beta)+\eps\beta
          =f(-1)+\eps f'(-1).
\]
Applying this to each entry gives
\[
 F(\theta)=F_0+\eps F_1.
\]

Write a color vector in $\R^q$ uniquely as $x=v+\eps a$, with
$v,a\in\F^q$. Since $\eps^2=0$,
\[
 F(\theta)(v+\eps a)=F_0v+\eps(F_0a+F_1v).
\]
For $x$ to be a $J_9$-coloring, the constant and $\eps$ coefficients
must both vanish. Hence
\begin{equation}\label{eq:two-lift-equations}
 F(\theta)(v+\eps a)=0
 \quad\Longleftrightarrow\quad
 F_0v=0\ \text{ and }\ F_0a=-F_1v.
\end{equation}

A vector $v\in\F^q$ is an $R_3$-coloring exactly when $F_0v=0$. Thus
\[
 E_D=\ker F_0=\Col_{R_3}(D),\qquad c=\dim E_D.
\]
Fix such a coloring $v$. For $a\in\F^q$, the color vector
$v+\eps a$ reduces to $v$ and is a \emph{lift} when it is a
$J_9$-coloring. By \eqref{eq:two-lift-equations}, finding a lift means
solving $F_0a=-F_1v$ for the unknown vector $a$.
Since $\im F_0=\{F_0a:a\in\F^q\}$ is a vector subspace, a solution
exists exactly when $F_1v\in\im F_0$. Define the linear map
\begin{equation}\label{eq:bD}
 b_D\colon E_D\longrightarrow\coker F_0
       =\F^p/\im F_0,
 \qquad v\longmapsto[F_1v].
\end{equation}
Here $[F_1v]$ denotes the class modulo $\im F_0$.
Its vanishing is precisely the solvability condition above, so
$\ker b_D$ is the subspace of liftable $R_3$-colorings.
We call $b_D$ the lifting obstruction map.

\subsection{Counting the lifts}
\begin{proposition}\label{prop:defect}
Let $D$ be a diagram of an oriented link $L$. Set
$c=\dim E_D$ and $r=\rank b_D$. Then
\begin{equation}\label{eq:count-rank}
 N_3(L)=3^c,\qquad N_9(L)=3^{2c-r},\qquad \delta(L)=r.
\end{equation}
In particular, $\delta(L)$ is a nonnegative integer and is invariant
under $t_3$- and $\bar t_6$-moves. Every unlink has $\delta=0$.
We also have
\begin{equation}\label{eq:block-rank}
 \delta(L)=
 \rank\begin{pmatrix}F_0&0\\F_1&F_0\end{pmatrix}
 -2\rank F_0.
\end{equation}
\end{proposition}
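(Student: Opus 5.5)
The plan is to count $J_9$-colorings fibrewise over the reduction map $\rho$, using the equivalence \eqref{eq:two-lift-equations}. The first count is immediate: $\Col_{R_3}(D)=\ker F_0=E_D$, so $N_3(L)=3^c$.

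For $N_9$, a $J_9$-coloring is a pair $(v,a)$ with $v\in E_D$ and $F_0a=-F_1v$. For fixed $v$, this equation has a solution exactly when $v\in\ker b_D$. When it is solvable, its solution set is a coset of $\ker F_0=E_D$, so it has $3^c$ elements. Summing over $v$ gives
\[
N_9(L)=|\ker b_D|\cdot 3^c=3^{c-r}\cdot3^c=3^{2c-r}.
\]
Substituting into \eqref{eq:introdelta} gives $\delta(L)=2c-(2c-r)=r$. Since $r$ is the rank of a linear map, it is a nonnegative integer.

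Invariance under $t_3$- and $\bar t_6$-moves follows from Proposition~\ref{prop:invariance}: both $N_3$ and $N_9$ are invariant, so $\delta$, being a function of the two counts, is too. The same reasoning shows $\delta$ is independent of the chosen diagram, since the counts are isotopy invariants. For unlinks, \eqref{eq:unlink} gives $N_3(U_m)=3^m$ and $N_9(U_m)=9^m$, hence $\delta(U_m)=2m-2m=0$.

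For \eqref{eq:block-rank}, I would note that the block matrix $\begin{pmatrix}F_0&0\\F_1&F_0\end{pmatrix}$, acting on $(v,a)\in\F^{2q}$, has kernel exactly the set of solutions of \eqref{eq:two-lift-equations}. That kernel has dimension $2c-r$. Rank--nullity then gives the rank of the block matrix as $2q-(2c-r)$. Since $c=q-\rank F_0$, this equals $2\rank F_0+r$, and rearranging gives \eqref{eq:block-rank}.

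There is no serious obstacle here. The only point needing care is that the fibre count is uniform: every liftable $v$ has exactly $|\ker F_0|$ lifts. This holds because the lifting equation is an affine equation with fixed linear part $F_0$.
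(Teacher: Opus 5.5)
Your proposal is correct and follows the paper's proof essentially step for step: you count fibrewise over $\ker b_D$ with each fibre a coset of $E_D=\ker F_0$, deduce invariance from Proposition~\ref{prop:invariance}, and apply rank--nullity to the block matrix, exactly as the paper does. The only cosmetic difference is that you get $\delta(U_m)=0$ from the counts in \eqref{eq:unlink}, whereas the paper notes that a crossingless diagram has $p=0$, so the codomain of $b_D$ is zero; both arguments are fine.
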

\begin{proof}
The $R_3$-colorings are exactly the vectors in $E_D=\ker F_0$.
This space has dimension $c$ over $\F$, so it contains $3^c$ vectors.
Hence $N_3(L)=3^c$.
As shown above, a coloring $v$ admits a lift exactly when $b_D(v)=0$.
Thus the liftable colorings are the vectors in $\ker b_D$.
By the rank--nullity theorem, this space has dimension $c-r$,
so there are $3^{c-r}$ choices for $v$.

Fix one such $v$. The equation $F_0a=-F_1v$ has a solution; choose
one and call it $a_0$. Since $F_0a_0=-F_1v$,
\[
 F_0a=-F_1v
 \quad\Longleftrightarrow\quad F_0a=F_0a_0
 \quad\Longleftrightarrow\quad F_0(a-a_0)=0.
\]
The last condition says that $a-a_0\in\ker F_0=E_D$.
Thus all solutions are exactly
\[
a=a_0+h,\qquad h\in E_D.
\]
Each of the $3^c$ choices for $h$ gives a lift $v+\eps(a_0+h)$.
The decomposition $x=v+\eps a$ is unique, so different choices of $h$
give different lifts. Hence this fixed $v$ has exactly $3^c$ lifts.
Every $J_9$-coloring reduces to a unique $v\in\ker b_D$.
Thus lifts of different $R_3$-colorings cannot coincide. Therefore
\[
 N_9(L)=3^{c-r}\,3^c=3^{2c-r}.
\]
Substituting these two counts into the definition of $\delta$ gives
\[
\begin{aligned}
\delta(L)&=2\log_3N_3(L)-\log_3N_9(L)\\
&=2c-(2c-r)=r.
\end{aligned}
\]
Since $N_3$ and $N_9$ are link invariants, $r=\delta(L)$ is independent
of $D$. Proposition~\ref{prop:invariance} shows that both counts are
unchanged by $t_3$- and $\bar t_6$-moves, so $\delta$ is also unchanged.
For a crossingless diagram of $U_m$, we have $p=0$, $q=m$, and
$E_D=\F^m$. The codomain of $b_D$ is zero, so $r=0$ and $\delta(U_m)=0$.

Finally, the $2p\times2q$ block matrix in \eqref{eq:block-rank} satisfies
\[
\begin{pmatrix}F_0&0\\F_1&F_0\end{pmatrix}
\begin{pmatrix}v\\a\end{pmatrix}
=\begin{pmatrix}F_0v\\F_1v+F_0a\end{pmatrix}.
\]
Its kernel consists of the solutions of \eqref{eq:two-lift-equations}.
The count above gives $3^{2c-r}$ solutions, so this kernel has
dimension $2c-r$. Since $c=q-\rank F_0$,
\[
 \rank\begin{pmatrix}F_0&0\\F_1&F_0\end{pmatrix}
 =2q-(2c-r)=2\rank F_0+r,
\]
which proves \eqref{eq:block-rank}.
\end{proof}

The liftable colorings form a subspace of $E_D$ of codimension
$\delta(L)$:
\[
 \delta(L)=\dim E_D-\dim\ker b_D,
 \qquad
 \frac{|\ker b_D|}{|E_D|}=3^{-\delta(L)}.
\]
Thus $\delta(L)=0$ if and only if every $R_3$-coloring lifts to a
$J_9$-coloring.

For $9_{46}$, Proposition~\ref{prop:knotcounts} gives
$N_3=27$ and $N_9=81$, so $c=3$ and $r=2$.
Exactly $3^{c-r}=3$ of its $R_3$-colorings lift, and each has $3^c=27$
lifts. Every constant $R_3$-coloring has a constant $J_9$-lift, so
the three liftable $R_3$-colorings are exactly the constant ones.
Their subspace has dimension $1$ inside the $3$-dimensional space
$E_D$, giving $\delta(9_{46})=3-1=2$.

\section{Seifert forms, connected sums, and infinite families}
\label{sec:seifert}\label{sec:structure}
The lifting obstruction in Section~\ref{sec:lifting} applies to all
oriented links. Here we restrict to knots and express the coloring
equations using a Seifert matrix. An alternating bilinear form then
describes the lifting obstruction, and its rank gives $\delta(K)$.
We use this description to prove evenness and additivity under
connected sum, before computing the counts for the family in
Theorem~\ref{thm:C}. We also prove that the lifting defect vanishes
for knots admitting matched diagrams.

\subsection{The Alexander module and the constant color}
\label{subsec:alexander-module}
Let $K$ be an oriented knot with a Seifert surface of genus $g$.
Choose oriented curves $a_1,\ldots,a_{2g}$ whose classes form an
integral basis of the first homology of the surface.
Following \cite{HLW}, use the Seifert matrix
$V_{ij}=\operatorname{lk}(a_i,a_j^+)$, where $a_j^+$ is the push-off
in the positive normal direction determined by the surface orientation.
Put
$\Lambda=\mathbb Z[t,t^{-1}]$.
Set $X_K=S^3\setminus K$.
Let $\widetilde X_K\to X_K$ be the infinite cyclic cover defined by
the abelianization $\pi_1(X_K)\to\mathbb Z$.
The Alexander module is
\[
 A_K=H_1(\widetilde X_K;\mathbb Z).
\]
The variable $t$ acts by the deck transformation corresponding to
a positive meridian.

A standard presentation of $A_K$ is given in \cite{HLW}.
There the rows of $V-tV^{\tr}$ record the relations.
We record relations by columns.
Transposing that matrix and multiplying by $-1$ gives
\[
 P(t)=tV-V^{\tr},\qquad
 A_K\cong\Lambda^{2g}/P(t)\Lambda^{2g}.
\]
Thus the images of the standard basis vectors of $\Lambda^{2g}$ generate $A_K$.
The entries in each column of $P(t)$ are the coefficients of one relation.

The parameter in our coloring relation acts as the inverse deck action.
For the left-hand crossing in Figure~\ref{fig:conventions}, choose
compatible positive Wirtinger meridians $\alpha,\beta,\gamma$
corresponding to $a,b,c$. Their relation is
$\alpha=\beta\gamma\beta^{-1}$.
The Fox derivatives of the relator $\alpha\beta\gamma^{-1}\beta^{-1}$,
after sending each meridian to $t$, are $(1,t-1,-t)$.
Thus the Fox relation is $a+(t-1)b-tc=0$.
Replacing $t$ by $t^{-1}$ and multiplying by $t$ gives
\[
 ta+(1-t)b-c=0,
\]
which is our coloring relation $c=ta+(1-t)b$.
Write $A_K^\iota$ for the module with this action, where
$\iota(t)=t^{-1}$. It has the same underlying abelian group as $A_K$,
but $t$ acts as the inverse of its action on $A_K$.
Replacing $t$ by $t^{-1}$ in $P(t)$ gives a presentation of $A_K^\iota$.
Multiplying all relations by the unit $-t$ does not change the module.
Define
\[
 Q(t)=-tP(t^{-1})=tV^{\tr}-V.
\]
Then $Q(t)$ is a $2g\times2g$ column presentation matrix for $A_K^\iota$:
\[
 A_K^\iota\cong\Lambda^{2g}/Q(t)\Lambda^{2g}.
\]

\begin{lemma}\label{lem:module}
Let $M$ be a $\Lambda$-module, with Alexander quandle operation
$x*y=tx+(1-t)y$. Write $\Col_M(K)$ for its coloring module.
For the crossing convention of Figure~\ref{fig:conventions},
\[
 \Col_M(K)\cong M\oplus\Hom_\Lambda(A_K^\iota,M).
\]
The first factor records the color on one chosen arc.
The second records the reduced coloring obtained by subtracting that
color from every arc. Its color on the chosen arc is zero.
Write $\Col_M^0(K)$ for the module of colorings that vanish on this arc.
In particular, if $M$ is finite, then
\begin{equation}\label{eq:seifertcolorings}
 |\Col_M(K)|
 =|M|\,\bigl|\ker(tV-V^{\tr}:M^{2g}\longrightarrow M^{2g})\bigr|.
\end{equation}
\end{lemma}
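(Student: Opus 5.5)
We split the argument into three parts: the splitting $\Col_M(K)\cong M\oplus\Col_M^0(K)$, the identification of the reduced colorings with $\Hom_\Lambda(A_K^\iota,M)$, and the count \eqref{eq:seifertcolorings}.

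\textbf{The splitting.} Fix a diagram $D$ of $K$ and a base arc $x_0$. The coloring relations \eqref{eq:crossing-row} are $\Lambda$-linear in the colors, and every row of $F(t)$ sums to zero. Hence constant vectors are colorings, and $x\mapsto x-x_0\mathbf 1$ sends $\Col_M(K)$ to $\Col_M^0(K)$. Together with the base color $x_0$, this gives an isomorphism
\[
\Col_M(K)\cong M\oplus\Col_M^0(K),\qquad x\mapsto\bigl(x_0,\;x-x_0\mathbf 1\bigr),
\]
whose inverse is $(m,y)\mapsto y+m\mathbf 1$.

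\textbf{Reduced colorings as homomorphisms.} A vector $x\in M^q$ is a coloring exactly when $F(t)x=0$. Dually, $\Col_M(K)\cong\Hom_\Lambda(\coker F(t)^{\tr},M)$, where $\coker F(t)^{\tr}$ is the module generated by the arcs subject to the crossing relations. Imposing $x_0=0$ kills one generator, so $\Col_M^0(K)\cong\Hom_\Lambda(N,M)$, where $N$ is the quotient of that module by the base generator.

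The main step is to identify $N$ with $A_K^\iota$. The preceding Fox calculus does this. The Wirtinger presentation, abelianized on the commutator subgroup, presents $A_K\oplus\Lambda$. Equivalently, it presents the relative module $H_1(\widetilde X_K,\widetilde{x}_0)$, and setting the base generator to zero leaves $A_K$. The Fox rows are $(1,t-1,-t)$. After the substitution $t\mapsto t^{-1}$ and multiplication by the unit $t$, these become exactly our rows $(t,1-t,-1)$. So the quotient of the arc module by the base generator is $A_K^\iota$.

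Two conventions need care. First, the right-hand crossing of Figure~\ref{fig:conventions} also gives the row $(t,1-t,-1)$, after the relabelling $(a,b,c)\to(c,b,a)$ already used in \eqref{eq:crossing-row}. Second, the Fox derivatives must be taken with respect to the correct orientation of the meridians. Since the paper takes the Seifert presentation of $A_K$ from \cite{HLW}, I would rely on the standard fact that the Wirtinger and Seifert presentations present the same module. The reduced arc module is then isomorphic to $A_K^\iota\cong\Lambda^{2g}/Q(t)\Lambda^{2g}$.

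\textbf{The count.} Applying $\Hom_\Lambda(-,M)$ to the presentation $\Lambda^{2g}\xrightarrow{Q(t)}\Lambda^{2g}\to A_K^\iota\to0$ gives
\[
\Hom_\Lambda(A_K^\iota,M)\cong\ker\bigl(Q(t)^{\tr}:M^{2g}\to M^{2g}\bigr),
\]
and
\[
Q(t)^{\tr}=tV-V^{\tr}.
\]
For finite $M$, the formula \eqref{eq:seifertcolorings} follows by taking cardinalities in the splitting from the first step.

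I expect the main obstacle to be the convention bookkeeping in the second step: checking that the inverse-action twist and the transpose land on $tV-V^{\tr}$ rather than on $tV^{\tr}-V$. A fallback is available, since $|\ker A|=|\ker A^{\tr}|$ for square matrices over finite modules of the form $\mathbb F_3^k$. That would not cover general finite $M$, however, so the identification itself has to be correct.
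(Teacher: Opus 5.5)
Your proposal is correct and follows essentially the paper's route: split off the color on a base arc, identify the arc module modulo the base generator with $A_K^\iota$, then read $\Hom_\Lambda(A_K^\iota,M)$ off the Seifert presentation as $\ker Q(t)^{\tr}=\ker(tV-V^{\tr})$. Your relative-homology step, where the base generator maps to a free generator of the $\Lambda$ summand, is the same fact the paper takes from Crowell's sequence ($\eta(x_0)=1$, so $B_D=\Lambda x_0\oplus\ker\eta$); the other differences are cosmetic, since you split on the coloring side using constant colorings and invoke left exactness of $\Hom$ where the paper checks well-definedness by hand.
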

\begin{proof}
Let $B_D$ be the $\Lambda$-module generated by the arcs of a diagram $D$ of $K$,
with the coloring relations of Figure~\ref{fig:conventions}.
We use the standard identification
$\Col_M(K)\cong\Hom_\Lambda(B_D,M)$; see \cite{Traldi}.
With the action convention established above, Crowell's module sequence
\cite{Crowell} gives
\[
 0\longrightarrow A_K^\iota\longrightarrow B_D
 \xrightarrow{\eta}\Lambda\longrightarrow0.
\]
With our action convention, Crowell's map takes values in the ideal
$(t^{-1}-1)\Lambda$. It sends each arc generator to $t^{-1}-1$.
The isomorphism $(t^{-1}-1)u\mapsto u$ identifies this ideal with $\Lambda$.
Thus the resulting map $\eta$ sends every arc generator to $1$.

Choose an arc generator $x_0\in B_D$.
Since $\eta(x_0)=1$, every $b\in B_D$ has the unique decomposition
\[
 b=\eta(b)x_0+\bigl(b-\eta(b)x_0\bigr),
 \qquad b-\eta(b)x_0\in\ker\eta.
\]
Thus $B_D=\Lambda x_0\oplus\ker\eta$, and the exact sequence
identifies $\ker\eta$ with $A_K^\iota$.
A homomorphism $h:B_D\to M$ is therefore determined by
$m=h(x_0)$ and $f=h|_{\ker\eta}$.
Conversely, any such pair defines a homomorphism by
\[
 h(b)=\eta(b)m+f\bigl(b-\eta(b)x_0\bigr).
\]
These constructions are inverse to each other and give
\[
 \Col_M(K)\cong M\oplus\Hom_\Lambda(A_K^\iota,M).
\]
For an arc generator $x$, we have $f(x-x_0)=h(x)-m$.
Hence $m$ is the color on the chosen arc, and $f$ records the
coloring obtained by subtracting $m$ from every arc color.

We next describe the homomorphisms $A_K^\iota\to M$ explicitly.
Under the presentation $A_K^\iota\cong\Lambda^{2g}/Q(t)\Lambda^{2g}$,
let $\bar e_i$ denote the image of the $i$-th standard basis vector.
The $j$-th column of $Q(t)$ gives the relation
$\sum_{i=1}^{2g}Q(t)_{ij}\bar e_i=0$.
A homomorphism $f:A_K^\iota\to M$ is determined by the values
$z_i=f(\bar e_i)$, since the $\bar e_i$ generate $A_K^\iota$.
These values must satisfy every relation.
Writing $z=(z_1,\ldots,z_{2g})^{\tr}$ and applying $f$ to the
$j$-th relation gives
\[
 0=\sum_{i=1}^{2g}Q(t)_{ij}z_i
   =\bigl(Q(t)^{\tr}z\bigr)_j.
\]
Thus $Q(t)^{\tr}z=0$, so evaluation on the generators defines a
$\Lambda$-linear map
\[
 \begin{aligned}
 \Phi:\Hom_\Lambda(A_K^\iota,M)
 &\longrightarrow\ker\bigl(Q(t)^{\tr}:M^{2g}\longrightarrow M^{2g}\bigr),\\
 f&\longmapsto\bigl(f(\bar e_1),\ldots,f(\bar e_{2g})\bigr)^{\tr}.
 \end{aligned}
\]
It is injective because two homomorphisms with the same values on
all generators are equal.

Conversely, take $z=(z_1,\ldots,z_{2g})^{\tr}$ with $Q(t)^{\tr}z=0$.
Every element of $A_K^\iota$ can be written as
$\sum_i\lambda_i\bar e_i$, with $\lambda_i\in\Lambda$.
Define
\[
 f_z\left(\sum_{i=1}^{2g}\lambda_i\bar e_i\right)
 =\sum_{i=1}^{2g}\lambda_i z_i.
\]
We must check that this value does not depend on the chosen coefficients.
If two coefficient columns $\lambda,\mu\in\Lambda^{2g}$ represent
the same element of $A_K^\iota$, then
$\lambda-\mu=Q(t)w$ for some $w\in\Lambda^{2g}$.
Hence
\[
 \sum_{i=1}^{2g}(\lambda_i-\mu_i)z_i
 =\sum_{j=1}^{2g}w_j\left(\sum_{i=1}^{2g}Q(t)_{ij}z_i\right)
 =0.
\]
Thus $f_z$ is well defined, and its formula shows that it is
$\Lambda$-linear. Moreover, $f_z(\bar e_i)=z_i$, so
$\Phi(f_z)=z$. Conversely, $f_{\Phi(f)}=f$ because the two maps
agree on every generator. This proves that $\Phi$ is an isomorphism:
\[
 \Hom_\Lambda(A_K^\iota,M)
 \cong\ker\bigl(Q(t)^{\tr}:M^{2g}\longrightarrow M^{2g}\bigr).
\]
The right-hand side therefore records the possible generator values
of the homomorphisms on the left-hand side.
For fixed choices, a homomorphism of coefficient modules acts by
postcomposition on $f$ and entrywise on $z$.
Hence these identifications commute with coefficient homomorphisms.
Since $Q(t)^{\tr}=tV-V^{\tr}$, taking cardinalities for finite $M$
proves \eqref{eq:seifertcolorings}.
\end{proof}

\subsection{The alternating form}
Reduce the entries of $V$ modulo $3$, keeping the notation $V$.
All vector spaces and bilinear forms in this subsection are over $\F$.
Set
\[
 W=V+V^{\tr},\qquad
 E_K=\ker(W:\F^{2g}\longrightarrow\F^{2g}),\qquad d=\dim E_K.
\]
For $x,y\in E_K$, define
\[
 \beta_K(x,y)=x^{\tr}Vy.
\]

We first explain the matrix and rank of this form.
Choose a basis $z_1,\ldots,z_d$ of $E_K$.
Each $z_i$ is a column with $2g$ entries, so
\[
 Z=(z_1\ \cdots\ z_d)
\]
is a $2g\times d$ matrix.
The matrix of $\beta_K$ in this basis is the $d\times d$ matrix $B$
whose $(i,j)$ entry is its value on the two basis vectors:
\[
 B_{ij}=\beta_K(z_i,z_j)=z_i^{\tr}Vz_j.
\]
The $(i,j)$ entry of $Z^{\tr}VZ$ is also $z_i^{\tr}Vz_j$.
Therefore $B=Z^{\tr}VZ$.
More explicitly, every $x,y\in E_K$ has unique coordinate columns
$\xi,\eta\in\F^d$ such that
\[
 x=\sum_i\xi_i z_i=Z\xi,\qquad
 y=\sum_j\eta_j z_j=Z\eta.
\]
Bilinearity gives
\[
 \beta_K(x,y)
 =\sum_{i,j}\xi_i B_{ij}\eta_j
 =\xi^{\tr}B\eta.
\]
Thus the rank of the form is the matrix rank
$\rank\beta_K=\rank B=\rank(Z^{\tr}VZ)$.
A change of basis replaces $B$ by $U^{\tr}BU$ for an invertible matrix
$U$, so this rank does not depend on the chosen basis.

The right radical consists of the vectors that pair to zero with
every vector of $E_K$:
\[
 \rad\beta_K
 =\{v\in E_K:\beta_K(x,v)=0\text{ for every }x\in E_K\}.
\]
If $v=Z\eta$, it is enough to test this condition on the basis vectors
$z_i$. Their pairings are
$\beta_K(z_i,v)=(B\eta)_i$.
Hence $v\in\rad\beta_K$ exactly when $B\eta=0$.
The coordinate isomorphism $\eta\mapsto Z\eta$ therefore identifies
$\ker B$ with $\rad\beta_K$.
Applying the rank--nullity theorem to $B:\F^d\to\F^d$ gives
\[
 \rank\beta_K
 =d-\dim\ker B
 =d-\dim\rad\beta_K.
\]

\begin{theorem}\label{thm:seifert}
The form $\beta_K$ is alternating. If $r=\rank\beta_K$, then
\begin{equation}\label{eq:seifertcounts}
 N_3(K)=3^{1+d},\qquad
 N_9(K)=3^{2+2d-r},\qquad
 \delta(K)=r.
\end{equation}
In particular, $0\leq\delta(K)\leq d$ and $\delta(K)$ is even.
\end{theorem}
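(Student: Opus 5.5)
The plan is to apply Lemma~\ref{lem:module} twice, once with $M=\F$ (where $t$ acts as $-1$) and once with $M=\R$ (where $t$ acts as $\theta=-1+\eps$). In each case the count reduces to the size of the kernel of $tV-V^{\tr}$ on $M^{2g}$. Both kernels can then be described directly in terms of $W$, $E_K$ and $\beta_K$.

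For $M=\F$ the matrix is $-V-V^{\tr}=-W$, so the kernel is exactly $E_K$. Formula \eqref{eq:seifertcolorings} then gives $N_3(K)=3\cdot 3^{d}=3^{1+d}$.

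For $M=\R$ I write a vector uniquely as $x=v+\eps a$ with $v,a\in\F^{2g}$. Since $\eps^2=0$,
\[
(\theta V-V^{\tr})(v+\eps a)=-Wv+\eps\bigl(Vv-Wa\bigr).
\]
So $x$ lies in the kernel exactly when $Wv=0$ and $Wa=Vv$, which mirrors \eqref{eq:two-lift-equations}.

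The key step is to identify the solvability condition $Vv\in\im W$. Because $W$ is symmetric, $\im W=(\ker W^{\tr})^{\perp}=E_K^{\perp}$ with respect to the standard dot product on $\F^{2g}$. Hence $Vv\in\im W$ if and only if $x^{\tr}Vv=0$ for every $x\in E_K$. Given $v\in E_K$, this says exactly that $v\in\rad\beta_K$.

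By the rank computation preceding the theorem, $\dim\rad\beta_K=d-r$. For each such $v$, the solutions $a$ form a coset of $\ker W=E_K$, so there are $3^{d}$ of them. As in the proof of Proposition~\ref{prop:defect}, the kernel therefore has $3^{d-r}\cdot 3^{d}$ elements. Multiplying by $|\R|=9$ gives $N_9(K)=3^{2+2d-r}$, and then
\[
\delta(K)=2(1+d)-(2+2d-r)=r.
\]

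To show $\beta_K$ is alternating, take $x\in E_K$. Then $x^{\tr}Vx=x^{\tr}V^{\tr}x$, because a scalar equals its transpose. Hence $2\,x^{\tr}Vx=x^{\tr}Wx=0$, since $Wx=0$. As $2$ is invertible in $\F$, we get $\beta_K(x,x)=0$.

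The rank of an alternating form is even, because its Gram matrix $B$ is skew-symmetric with zero diagonal. Also $0\le r\le d$, since $B$ is $d\times d$.

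The only point needing care is the identity $\im W=E_K^{\perp}$. It relies on the symmetry of $W$ together with the dimension count $\dim\im W=2g-d=\dim E_K^{\perp}$ over a field. I expect this to be the main (if mild) obstacle; everything else is bookkeeping parallel to Proposition~\ref{prop:defect}.
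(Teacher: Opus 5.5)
Your proposal is correct and follows essentially the same route as the paper: the same reduction via Lemma~\ref{lem:module}, the same splitting of $v+\eps a$ into $Wv=0$ and $Wa=Vv$, the identification $\im W=E_K^{\perp}$ by symmetry of $W$ plus a dimension count, and the same coset count $3^{d-r}\cdot 3^{d}$. The only differences are minor: you cite the standard fact that an alternating form has even rank, whereas the paper proves it inline by repeatedly splitting off a rank-$2$ block $H$ with $E_K=H\oplus H^{\perp_\beta}$, and you check $\beta_K(x,x)=0$ directly rather than first deriving skew-symmetry.
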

\begin{proof}
\emph{The alternating property.}
For $x,y\in E_K$, we have
\[
 \beta_K(x,y)+\beta_K(y,x)
 =x^{\tr}(V+V^{\tr})y=x^{\tr}Wy=0.
\]
Here $Wy=0$ because $y\in E_K$.
Taking $y=x$ gives $2\beta_K(x,x)=0$.
Since $2$ is invertible in $\F$, we obtain $\beta_K(x,x)=0$.
Thus $\beta_K$ is alternating.

\smallskip
\emph{Counting the $R_3$-colorings.}
Fix the chosen arc used in Lemma~\ref{lem:module}.
For $R_3$, the parameter is $t=-1$, so the matrix in that lemma is
\[
 tV-V^{\tr}=-V-V^{\tr}=-W.
\]
The kernel is $E_K$, so there are $3^d$ reduced $R_3$-colorings.
Lemma~\ref{lem:module} gives three independent choices for the
color on the chosen arc. Consequently,
\[
 N_3(K)=3\cdot3^d=3^{1+d}.
\]

\smallskip
\emph{The equations for a reduced $J_9$-coloring.}
For $J_9$, the parameter is $\theta=-1+\eps$.
By Lemma~\ref{lem:module}, reduced $J_9$-colorings correspond to the kernel of
\[
 \theta V-V^{\tr}=-W+\eps V:
 \R^{2g}\longrightarrow\R^{2g}.
\]
Every vector in $\R^{2g}$ has a unique expression $v+\eps a$ with
$v,a\in\F^{2g}$.
Since $\eps^2=0$,
\[
 (-W+\eps V)(v+\eps a)
 =-Wv+\eps(Vv-Wa).
\]
The elements $1,\eps$ form a basis of $\R$ over $\F$.
This expression is zero exactly when both coefficients are zero.
Therefore
\[
 v+\eps a\in\ker(-W+\eps V)
 \quad\Longleftrightarrow\quad
 Wv=0\ \text{and}\ Wa=Vv.
\]
We must first choose $v\in E_K$.
Once $v$ is fixed, $a$ is the unknown vector, and the remaining
equation to solve is $Wa=Vv$.

These vectors also describe lifts of reduced colorings.
Use the same chosen arc and the same Alexander-module presentation
for both targets.
Under the coefficient map $\rho:\R\to\F$, $\rho(v+\eps a)=v$.
The correspondence in Lemma~\ref{lem:module} commutes with this reduction:
a homomorphism $f:A_K^\iota\to\R$ is replaced by $\rho\circ f$.
Its values on the same generators are reduced entry by entry.
Thus a solution $a$ gives a reduced $J_9$-coloring lifting the
reduced $R_3$-coloring represented by $v$.

\smallskip
\emph{When the equation for $a$ has a solution.}
By definition, $\im W$ consists of the vectors $Wa$.
For a fixed $v\in E_K$, the equation $Wa=Vv$ therefore has a solution
if and only if $Vv\in\im W$.
We now describe this image.

Take the orthogonal complement of $E_K$ in the full space $\F^{2g}$,
using the standard dot product:
\[
 E_K^\perp
 =\{z\in\F^{2g}:x^{\tr}z=0\text{ for every }x\in E_K\}.
\]
Since $W$ is symmetric, for $x\in E_K$ and $a\in\F^{2g}$ we have
\[
 x^{\tr}Wa=(Wx)^{\tr}a=0.
\]
Every vector in $\im W$ is therefore in $E_K^\perp$.

To compare dimensions, use the basis matrix $Z$ defined above.
A vector $z$ lies in $E_K^\perp$ exactly when $z_i^{\tr}z=0$
for all $i$, or equivalently $Z^{\tr}z=0$.
Thus
\[
 E_K^\perp=\ker(Z^{\tr}:\F^{2g}\longrightarrow\F^d).
\]
The rows of $Z^{\tr}$ are linearly independent because the columns
$z_1,\ldots,z_d$ of $Z$ form a basis of $E_K$.
Hence $\rank Z^{\tr}=d$, and the rank--nullity theorem gives
\[
 \dim E_K^\perp=2g-\rank Z^{\tr}=2g-d.
\]
Applying the same theorem to $W:\F^{2g}\to\F^{2g}$ gives
\[
 \dim\im W=2g-\dim\ker W=2g-d.
\]
The inclusion and equality of dimensions now imply
$\im W=E_K^\perp$.

It follows that $Wa=Vv$ has a solution exactly when
$Vv\in E_K^\perp$, that is,
\[
 x^{\tr}Vv=\beta_K(x,v)=0
 \qquad\text{for every }x\in E_K.
\]
This is precisely the defining condition for $v\in\rad\beta_K$.
Thus a vector $v\in E_K$ outside the radical has no lift.
For each $v\in\rad\beta_K$, at least one solution $a_0$ exists.

\smallskip
\emph{Counting the solutions and the $J_9$-colorings.}
There are two choices to make.
First choose $v\in\rad\beta_K$.
Since this space has dimension $d-r$, there are $3^{d-r}$ choices.
Next, fix this $v$ and one solution $a_0$ of $Wa=Vv$.
Another vector $a$ solves the same equation if and only if
\[
 W(a-a_0)=0
 \quad\Longleftrightarrow\quad a-a_0\in E_K.
\]
Hence all solutions, and only those solutions, are
\[
 a=a_0+h,\qquad h\in E_K.
\]
There are $3^d$ choices for $h$, giving $3^d$ distinct choices for $a$.

Each such pair $(v,a)$ gives one vector $v+\eps a$ in the kernel.
Conversely, every vector in the kernel gives exactly one such pair,
because its constant and $\eps$ coefficients are unique.
Thus the number of reduced $J_9$-colorings is
\[
 |\Col_{J_9}^0(K)|
 =|\ker(-W+\eps V)|
 =3^{d-r}\cdot3^d
 =3^{2d-r}.
\]
By Lemma~\ref{lem:module}, each reduced coloring can be combined
with any of the nine colors on the chosen arc.
Therefore the total coloring count is
\[
 N_9(K)=9\cdot3^{2d-r}=3^{2+2d-r}.
\]
Substituting the two counts into the definition of the defect gives
\[
 \delta(K)=2\log_3N_3(K)-\log_3N_9(K)
 =2(1+d)-(2+2d-r)=r.
\]

\smallskip
\emph{Why the defect is even.}
We have proved $\delta(K)=r$, so it remains to show that the rank
of $\beta_K$ is even.
If $\beta_K=0$, then $r=0$.
Otherwise choose $x,y\in E_K$ with
$\lambda=\beta_K(x,y)\ne0$.
These vectors are linearly independent, since a multiple of $x$
pairs to zero with $x$.
On $H=\operatorname{span}\{x,y\}$, the matrix of the form is
\[
 \begin{pmatrix}0&\lambda\\-\lambda&0\end{pmatrix}.
\]
Its determinant is $\lambda^2\ne0$, so this part of the form has rank $2$.

Define the complement using $\beta_K$:
\[
 H^{\perp_\beta}
 =\{z\in E_K:\beta_K(x,z)=\beta_K(y,z)=0\}.
\]
For any $z\in E_K$, put
\[
 h=-\frac{\beta_K(y,z)}{\lambda}x
   +\frac{\beta_K(x,z)}{\lambda}y\in H.
\]
Direct substitution gives
$\beta_K(x,h)=\beta_K(x,z)$ and
$\beta_K(y,h)=\beta_K(y,z)$.
Thus $z-h\in H^{\perp_\beta}$, so every vector is a sum of vectors
from $H$ and $H^{\perp_\beta}$.
The intersection is zero: if $ax+by\in H^{\perp_\beta}$, then
$b\lambda=0$ and $-a\lambda=0$, forcing $a=b=0$.
We have therefore proved
\[
 E_K=H\oplus H^{\perp_\beta}.
\]
The pairings from $H$ to $H^{\perp_\beta}$ vanish by definition,
and the reverse pairings vanish because $\beta_K$ is alternating.
In a basis adapted to this direct sum, the matrix is block diagonal.
Its first block has rank $2$, so
\[
 r=2+\rank\bigl(\beta_K|_{H^{\perp_\beta}}\bigr).
\]
The restricted form is again alternating.
If it is nonzero, repeat the same construction on that smaller space.
Each step removes a block of rank $2$ and reduces the dimension by $2$.
The process ends with a zero form, which has rank $0$.
Thus $r$ is a sum of copies of $2$, and $\delta(K)=r$ is even.
Finally, $0\leq r\leq d$ because $r$ is the rank of a $d\times d$ matrix.
\end{proof}

For a diagram $D$ of $K$, the space $E_D$ in
Section~\ref{sec:lifting} consists of all $R_3$-colorings, whereas
$E_K$ parametrizes the reduced colorings. Lemma~\ref{lem:module} gives
\[
 E_D\cong\F\oplus E_K,\qquad c=1+d.
\]

For the unknot, take a Seifert disk, so $g=0$ and the Seifert matrix
is empty. Then $E_K=0$ and $d=r=0$.
The formulas give $N_3=3$, $N_9=9$, and $\delta=0$, as expected.

\begin{example}[Equal Alexander polynomials and different defects]
\label{ex:same-alexander}
Lemma~1(2)--(4) of Friedl and Teichner~\cite{FT} gives the integral
Seifert matrices
\[
 V_{9_{46}}=\begin{pmatrix}0&2\\1&0\end{pmatrix},
 \qquad
 V_{6_1}=\begin{pmatrix}-2&2\\1&0\end{pmatrix}.
\]
We compute $W$, $E_K$, and $\beta_K$ from their reductions modulo $3$,
as in Theorem~\ref{thm:seifert}.

For $9_{46}$, the matrix $W=V+V^{\tr}$ is zero modulo $3$.
Thus $E_{9_{46}}=\F^2$, $d=2$, and we may take $Z=I_2$.
The matrix of $\beta_{9_{46}}$ is then
$Z^{\tr}VZ=V$, whose determinant is $-2=1$ in $\F$.
Hence $r=2$.

For $6_1$, reduction modulo $3$ gives
\[
 W=\begin{pmatrix}2&0\\0&0\end{pmatrix},
 \qquad E_{6_1}=\F e_2,\qquad e_2=(0,1)^{\tr}.
\]
Here $d=1$, and the matrix of the restricted form is
$e_2^{\tr}V_{6_1}e_2=0$, so $r=0$.
Substitution into \eqref{eq:seifertcounts} gives
\[
\begin{array}{c|cc|ccc}
 K & d&r&N_3(K)&N_9(K)&\delta(K)\\ \hline
 9_{46}&2&2&27&81&2\\
 6_1&1&0&9&81&0
\end{array}
\]
The first row agrees with the diagram computation in
Proposition~\ref{prop:knotcounts}.

For both displayed integral matrices,
\[
 \det(tV-V^{\tr})=-(2t-1)(t-2).
\]
Thus the two knots have the same Alexander polynomial, up to units,
but different lifting defects. Their $N_9$-counts also agree, so
$N_9$ alone does not determine the defect.
Alexander quandle invariants distinguishing knots with the same
Alexander polynomial were studied by Benedetti and Frigerio
\cite{BF}; this example exhibits that distinction for the present
lifting defect.
\end{example}

\subsection{Additivity under connected sum}
\begin{theorem}\label{thm:additivity}
For oriented knots $K_1,K_2$,
\[
 \delta(K_1\#K_2)=\delta(K_1)+\delta(K_2).
\]
\end{theorem}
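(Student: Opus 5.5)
We will use the Seifert-form description of Theorem~\ref{thm:seifert}, which gives \(\delta(K)=\rank\beta_K\), together with the standard fact that a Seifert surface for \(K_1\#K_2\) is the boundary connected sum of Seifert surfaces \(F_1,F_2\) for \(K_1,K_2\). With the union of the two homology bases as the basis, the Seifert matrix is block diagonal:
\[
 V=V_1\oplus V_2 .
\]
The off-diagonal linking numbers \(\operatorname{lk}(a_i,a_j^+)\), with \(a_i\subset F_1\) and \(a_j\subset F_2\), vanish. The reason is that the two surfaces lie in disjoint balls separated by a sphere meeting the surface in a single arc, and any push-off of a curve on \(F_2\) can be kept in the second ball. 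Since \(\delta\) does not depend on the chosen Seifert matrix, we may use this one.

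Next we reduce modulo \(3\) and pass to the form. From the block structure,
\[
 W=W_1\oplus W_2,\qquad E_{K_1\#K_2}=\ker W=E_{K_1}\oplus E_{K_2}.
\]
For \(x=(x_1,x_2)\) and \(y=(y_1,y_2)\) in this kernel,
\[
 \beta(x,y)=x_1^{\tr}V_1y_1+x_2^{\tr}V_2y_2=\beta_{K_1}(x_1,y_1)+\beta_{K_2}(x_2,y_2).
\]
So \(\beta_{K_1\#K_2}\) is the orthogonal direct sum \(\beta_{K_1}\oplus\beta_{K_2}\). If \(Z_1,Z_2\) are basis matrices for \(E_{K_1},E_{K_2}\), then \(Z=Z_1\oplus Z_2\) is a basis matrix for \(E_{K_1\#K_2}\). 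Hence \(Z^{\tr}VZ\) is the block diagonal matrix \(Z_1^{\tr}V_1Z_1\oplus Z_2^{\tr}V_2Z_2\), and its rank is the sum of the two ranks. Theorem~\ref{thm:seifert} then gives additivity.

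The main point requiring care is the geometric claim that the Seifert matrix of the connected sum is block diagonal. This is classical (for example, in the reference \cite{HLW}), and we would cite it rather than reprove it.

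As a cross-check, we could also argue without surfaces. Lemma~\ref{lem:module} and the standard splitting \(A_{K_1\#K_2}\cong A_{K_1}\oplus A_{K_2}\) give that reduced colorings multiply. So \(N_X(K_1\#K_2)=N_X(K_1)N_X(K_2)/|X|\) for \(X=R_3,J_9\). Substituting into the definition of \(\delta\), the correction terms \(2\log_3 3-\log_3 9=0\) cancel, which again gives additivity. We would mention this as a remark, but the Seifert argument is the primary proof.
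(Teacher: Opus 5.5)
Your argument is correct and is essentially the paper's own proof: use the block-diagonal Seifert matrix $V=V_1\oplus V_2$ (the paper cites Lemma~1.12(1) of \cite{Conway} for it), note that $W$, $E_K$ and $\beta_K$ split accordingly, and conclude from Theorem~\ref{thm:seifert} that the rank of the block-diagonal matrix of $\beta_{K_1\#K_2}$ is the sum of the two ranks. Your coloring-count cross-check is also sound and gives the same formula $N_X(K_1\#K_2)=N_X(K_1)N_X(K_2)/|X|$ as the paper's second proof, although the paper obtains it from the end-monochromatic and homogeneity lemmas of \cite{CSV} rather than from the splitting of the Alexander module.
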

\begin{proof}
Let $V_i$ be a Seifert matrix for $K_i$, $i=1,2$.
By Lemma~1.12(1) of~\cite{Conway}, we may use
$V=V_1\oplus V_2$ as a Seifert matrix for $K_1\#K_2$.

Apply the definitions of $W$, $E_K$, and $\beta_K$ to these matrices.
They give
\[
\begin{aligned}
 W_{K_1\#K_2}&=W_{K_1}\oplus W_{K_2},\\
 E_{K_1\#K_2}&=E_{K_1}\oplus E_{K_2},\\
 \beta_{K_1\#K_2}&=\beta_{K_1}\oplus\beta_{K_2}.
\end{aligned}
\]
Write $r_i=\rank\beta_{K_i}$.
Choose a basis of each $E_{K_i}$, and let $B_i$ be the matrix of
$\beta_{K_i}$ in that basis. In the combined basis, the matrix of
$\beta_{K_1\#K_2}$ is
\[
 \begin{pmatrix}B_1&0\\0&B_2\end{pmatrix},
\]
whose rank is $r_1+r_2$.
Theorem~\ref{thm:seifert} therefore gives
$\delta(K_1\#K_2)=r_1+r_2=\delta(K_1)+\delta(K_2)$.
\end{proof}

\begin{proof}[Proof of Theorem~\ref{thm:B}]
This follows from Theorems~\ref{thm:seifert} and~\ref{thm:additivity}.
\end{proof}

\begin{proof}[Another proof of Theorem~\ref{thm:additivity}]
We prove additivity directly from quandle coloring counts, using two
lemmas of Clark, Saito and Vendramin.

A pair $(K,Q)$ is called \emph{end-monochromatic} if every $Q$-coloring
of the $1$-tangle obtained by cutting a small open arc from $K$ assigns
the same color to its two endpoints.
By Lemma~4.4 of~\cite{CSV}, if $Q$ is faithful, then $(K,Q)$ is
end-monochromatic for every knot $K$.
By Lemma~4.6 of~\cite{CSV}, if $Q$ is finite and homogeneous, and at least
one of $(K_1,Q)$ and $(K_2,Q)$ is end-monochromatic, then
\[
 |\Col_Q(K_1\#K_2)|
 =\frac{|\Col_Q(K_1)|\,|\Col_Q(K_2)|}{|Q|}.
\]
Here the connected sum respects the orientations of both knots.
We check these hypotheses for $Q=R_3$ and $Q=J_9$.

Both quandles are finite, of orders $3$ and $9$, respectively.
To verify homogeneity, observe that each translation $x\mapsto x+a$
is a quandle automorphism: it has inverse $x\mapsto x-a$ and satisfies
\[
 (x+a)*(y+a)=(x*y)+a.
\]
For any two colors $x,z$, translation by $z-x$ sends $x$ to $z$.
Thus the automorphism group acts transitively, which is the
homogeneity condition in Lemma~4.6.

Faithfulness means that different colors determine different right
translations. Write $R_y(x)=tx+(1-t)y$, where $t=-1$ for $R_3$
and $t=\theta$ for $J_9$.
If $R_y=R_z$, evaluating at $x=0$ gives
\[
 (1-t)(y-z)=0.
\]
For $R_3$, $1-t=2$ is invertible.
For $J_9$, $1-\theta=2-\eps$ has inverse $2+\eps$, since
$(2-\eps)(2+\eps)=1$ in $\R$.
Hence $y=z$, proving faithfulness.

Lemma~4.4 therefore shows that both $(K_1,Q)$ and $(K_2,Q)$ are
end-monochromatic for each of the two targets. In particular, the
requirement that at least one factor be end-monochromatic is satisfied.
Together with finiteness and homogeneity, this verifies every
hypothesis of Lemma~4.6. Applying its formula with $|R_3|=3$ and
$|J_9|=9$ gives
\begin{equation}\label{eq:connectedcounts}
 \begin{aligned}
 N_3(K_1\#K_2)&=\frac{N_3(K_1)N_3(K_2)}{3},\\
 N_9(K_1\#K_2)&=\frac{N_9(K_1)N_9(K_2)}{9}.
 \end{aligned}
\end{equation}
Finally, substitute these counts into the definition of $\delta$:
\[
 \begin{aligned}
 \delta(K_1\#K_2)
 &=\delta(K_1)+\delta(K_2)-2\log_3 3+\log_3 9\\
 &=\delta(K_1)+\delta(K_2).
 \end{aligned}
\]
\end{proof}

\subsection{The connected-sum family}
\begin{proof}[Proof of Theorem~\ref{thm:C} and its consequences]
Recall that $K_s$ is the connected sum of $s\geq1$ copies of $9_{46}$.
Proposition~\ref{prop:knotcounts} gives
$N_3(9_{46})=27$ and $N_9(9_{46})=81$.
Applying \eqref{eq:connectedcounts} at the $s-1$
connected sums gives
\[
 N_3(K_s)=\frac{27^s}{3^{s-1}}=3^{2s+1},\qquad
 N_9(K_s)=\frac{81^s}{9^{s-1}}=3^{2s+2}.
\]
Hence
\[
 \delta(K_s)=2(2s+1)-(2s+2)=2s.
\]
By Proposition~\ref{prop:defect}, $\delta$ is unchanged by
$t_3$- and $\bar t_6$-moves and is zero on every unlink.
Since $\delta(K_s)>0$, no $K_s$ is equivalent to an unlink.
Different values of $s$ give different defects, so
$K_s\not\sim K_t$ when $s\ne t$.
Thus the family contains infinitely many distinct knots and
infinitely many equivalence classes under these oriented moves.

For the ordinary $3$-move reduction, forget the orientations.
Take $s$ disjoint standard pretzel diagrams $P(3,3,-3)$.
Choose joining arcs on their exterior closing arcs, away from the
twist boxes. Form their planar connected sum in a chain, without
adding crossings.
The three $3$-moves in Corollary~\ref{cor:separation} take place inside
the twist boxes, so they do not affect the joining arcs.

Apply the three moves in each summand, using $3s$ moves in total.
Each summand becomes three planar circles, with the $s-1$ connections
still in place. Before making these connections there are $3s$ circles.
The summands are joined in a chain. Each connection attaches a new
summand to the preceding ones, so it joins two distinct components
and decreases the number of components by one.
Thus the resulting crossingless diagram has
\[
 3s-(s-1)=2s+1
\]
components and represents $U_{2s+1}$.

By Lemma~\ref{lem:mirror}, reversing the orientation or taking the
mirror does not change the coloring counts.
Reflecting the three moves in each summand gives the corresponding
reduction for the mirror knot.
Thus the same conclusions hold for the family of mirrors.

Finally, Theorem~\ref{thm:seifert} shows that every knot has a
nonnegative even defect.
The unknot realizes $0$, and $K_s$ realizes $2s$ for each $s\geq1$.
The set of values on knots is therefore exactly $\{0,2,4,\ldots\}$.
\end{proof}

\subsection{Knots admitting matched diagrams}\label{subsec:matched}
Following Duzhin and Shkolnikov \cite{DS}, a knot diagram is
called \emph{matched} if all its crossings can be partitioned into
pairs of the local types shown in Figure~1 of~\cite{DS}.
A knot admitting a matched diagram is called \emph{bipartite}.

\begin{corollary}[Consequence of Duzhin--Shkolnikov]\label{cor:matched}
If a knot $K$ admits a matched diagram, then $\delta(K)=0$.
\end{corollary}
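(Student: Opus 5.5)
The plan is to apply Theorem~\ref{thm:seifert}, which gives $\delta(K)=\rank\beta_K$, to a Seifert matrix of special shape supplied by Duzhin and Shkolnikov. Their result states that a knot admitting a matched diagram has a Seifert matrix of the form
\[
 V=\begin{pmatrix}A&C\\0&D\end{pmatrix}
\]
with block sizes $g\times g$ in a basis of size $2g$. It is either block triangular with a zero block, or, in the form I expect to use, $V$ has a half-dimensional subspace $L$ on which the Seifert form vanishes. In addition, $V+V^{\tr}$ is divisible by $2$ in a controlled way. I will first restate their Seifert-matrix characterization precisely: bipartite knots admit a Seifert matrix $V$ with $V_{ij}$ arising from the pairing of twisted bands, and I will take whichever concrete block form they give.

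The key algebraic step is to show that, after reduction modulo $3$, the alternating form $\beta_K$ on $E_K=\ker W$ vanishes identically, or at least has zero rank. My first attempt uses the fact that matched crossings come in pairs forming full twists. Each pair of crossings corresponds to a band with an even number of half-twists, or to a clasp, so the Seifert surface built by Duzhin--Shkolnikov has a basis in which $V$ is \emph{symmetric} up to a controllable correction, or has the form $V=\begin{pmatrix}X&Y\\ 0&0\end{pmatrix}$-type with an isotropic half. If $V$ is symmetric modulo $3$ on $E_K$, then $x^{\tr}Vy=y^{\tr}Vx$. Combined with alternation, $\beta_K(x,y)=-\beta_K(y,x)$, this gives $2\beta_K(x,y)=0$ and hence $\beta_K=0$, since $2$ is invertible in $\F$. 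So $\delta(K)=0$.

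The alternative route, if their matrix has the form $V=\begin{pmatrix}0&P\\Q&R\end{pmatrix}$ with a symmetry relation such as $Q=P^{\tr}$ or $V-V^{\tr}$ concentrated off-diagonal, is to compute $\beta_K$ directly on $E_K=\ker(V+V^{\tr})$. Here I would write $x=(x_1,x_2)$ and use $Wx=0$ to express one half in terms of the other, then check that $x^{\tr}Vy$ becomes a symmetric expression in $x,y$, and conclude as above.

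The main obstacle is pinning down exactly which matrix normal form Duzhin--Shkolnikov provide and verifying it yields the symmetry of $V$ restricted to $E_K$. I would try the symmetric-on-$E_K$ argument first, since it only needs $V-V^{\tr}$ to vanish on $E_K\times E_K$ modulo $3$. Finally, since $\delta$ is a knot invariant independent of the Seifert matrix chosen (Theorem~\ref{thm:seifert} expresses it via coloring counts), any such matrix suffices. The consequence that no $K_s$ is $\sim$-equivalent to a bipartite knot then follows from $\delta(K_s)=2s>0$.
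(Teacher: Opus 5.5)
Your reduction is correct as far as it goes. Since $W$ annihilates $E_K$, one has $x^{\tr}Vy+y^{\tr}Vx=0$ on $E_K$. It therefore suffices to show $x^{\tr}(V-V^{\tr})y=0$ for $x,y\in E_K$, after which $2\beta_K=0$ forces $\beta_K=0$.

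\textbf{The gap.} That vanishing is the entire content of the corollary, and you leave it open (``pinning down exactly which matrix normal form Duzhin--Shkolnikov provide''). None of the shapes you guess does the job.
\begin{itemize}
\item Block triangularity alone is useless. $V_{9_{46}}\oplus V_{9_{46}}$ has the form $\begin{pmatrix}A&C\\0&D\end{pmatrix}$ with $2\times2$ blocks. Yet it is a Seifert matrix of $9_{46}\#9_{46}$, whose defect is $4$ by Theorem~\ref{thm:additivity}.
\item $V$ itself can never be symmetric, even modulo $3$, because $\det(V-V^{\tr})=\pm1$ for a knot. The symmetry can only hold on $E_K$, and your heuristic about paired crossings giving even-twisted bands does not produce it.
\item An isotropic half for the Seifert form is not what Duzhin--Shkolnikov supply either.
\end{itemize}

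\textbf{The missing input} is Lemma~1 of~\cite{DS}: a knot with a matched diagram has a Seifert matrix $V=\begin{pmatrix}S&0\\I_n&T\end{pmatrix}$ with $S=S^{\tr}$ and $T=T^{\tr}$. The decisive features are the symmetric diagonal blocks together with the zero off-diagonal block.

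With this, your route closes at once. Modulo $3$, $W=\begin{pmatrix}-S&I_n\\I_n&-T\end{pmatrix}$, so $\binom{u}{v}\in E_K$ forces $v=Su$. Meanwhile $V-V^{\tr}=\begin{pmatrix}0&-I_n\\I_n&0\end{pmatrix}$. Hence for $x=\binom{u}{Su}$ and $y=\binom{w}{Sw}$ one gets $x^{\tr}(V-V^{\tr})y=-u^{\tr}Sw+(Su)^{\tr}w=0$, by the symmetry of $S$.

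The paper instead evaluates $\beta_K$ directly. It also uses $TSw=w$ to get $Vy=\binom{Sw}{2w}$, and hence $\beta_K(x,y)=3u^{\tr}Sw=0$. Your variant is equally valid and slightly more conceptual: it says $E_K$ is isotropic for the intersection form $V-V^{\tr}$, and it needs only $v=Su$, the symmetry of $S$, and the alternating property. As written, however, the proposal is a plan whose only nontrivial step is missing.
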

\begin{proof}
By Lemma~1 of~\cite{DS}, the knot $K$ has a Seifert matrix
\[
 V=\begin{pmatrix}S&0\\I_n&T\end{pmatrix},
 \qquad S=S^{\tr},\quad T=T^{\tr},
\]
where $S,T$ are $n\times n$ integer matrices.
The Seifert-pairing convention in that lemma agrees with
Subsection~\ref{subsec:alexander-module}.
Reduce the entries modulo $3$, retaining the same notation.
The matrix used in Theorem~\ref{thm:seifert} is then
\[
 W=V+V^{\tr}
   =\begin{pmatrix}-S&I_n\\I_n&-T\end{pmatrix}.
\]
Write a vector in $\F^{2n}$ as $x=\binom{u}{v}$, with $u,v\in\F^n$.
The equation $Wx=0$ says $v=Su$ and $Tv=u$. Hence
\[
 E_K=\left\{\binom{u}{Su}:u\in\F^n,\ TSu=u\right\}.
\]
Take $x=\binom{u}{Su}$ and $y=\binom{w}{Sw}$ in $E_K$.
Since $TSw=w$, we have
\[
 Vy=\binom{Sw}{w+TSw}=\binom{Sw}{2w}.
\]
Using $S^{\tr}=S$, we obtain
\[
 \beta_K(x,y)=x^{\tr}Vy
 =u^{\tr}Sw+2(Su)^{\tr}w
 =3u^{\tr}Sw=0\quad\text{in }\F.
\]
Thus $\beta_K$ is the zero form, and Theorem~\ref{thm:seifert} gives
$\delta(K)=\rank\beta_K=0$.
\end{proof}

By Proposition~\ref{prop:defect}, $\delta$ is invariant under $\sim$.
Consequently, no knot with positive defect is $\sim$-equivalent
to a knot admitting a matched diagram.
In particular, no knot $\sim$-equivalent to $K_s$, $s\geq1$,
admits a matched diagram, since $\delta(K_s)=2s$ by
Theorem~\ref{thm:C}.
Duzhin and Shkolnikov already proved that $9_{46}$ has no matched
diagram \cite{DS}.
We do not claim that $\delta(K)=0$ guarantees the existence of
a matched diagram.

\begin{remark}\label{rem:nonexhaustive}
The knots $K_s$ do not represent all $\sim$-equivalence classes
of knots, even if we set $K_0=U_1$.
By Example~\ref{ex:same-alexander}, $N_3(6_1)=9$.
On the other hand, Theorem~\ref{thm:C} and \eqref{eq:unlink} give
$N_3(K_s)=3^{2s+1}$ for $s\geq0$, which is never $9$.
Since $N_3$ is invariant under $\sim$, $6_1$ is not equivalent
to any $K_s$.
Thus realizing every possible value of $\delta$ does not mean
realizing every equivalence class.
\end{remark}

\section*{Acknowledgments}
The author acknowledges assistance from GPT-6 (OpenAI) with programming and language editing. All results were checked by the author. The author takes full responsibility for the content of this paper.


\begin{thebibliography}{99}
\bibitem{BF}
R.~Benedetti and R.~Frigerio,
\emph{Alexander quandle lower bounds for link genera},
J. Knot Theory Ramifications \textbf{21} (2012), no.~8, 1250076.
\href{https://doi.org/10.1142/S0218216512500769}{doi:10.1142/S0218216512500769}.

\bibitem{CESY}
W.~E. Clark, M.~Elhamdadi, M.~Saito and T.~Yeatman,
\emph{Quandle colorings of knots and applications},
J. Knot Theory Ramifications \textbf{23} (2014), no.~6, 1450035.
\href{https://doi.org/10.1142/S0218216514500357}{doi:10.1142/S0218216514500357}.

\bibitem{CSV}
W.~E. Clark, M.~Saito and L.~Vendramin,
\emph{Quandle coloring and cocycle invariants of composite knots and abelian extensions},
J. Knot Theory Ramifications \textbf{25} (2016), no.~5, 1650024.
\href{https://doi.org/10.1142/S0218216516500243}{doi:10.1142/S0218216516500243}.

\bibitem{Conway}
A.~Conway, \emph{Algebraic concordance and Casson--Gordon invariants},
lecture notes, revised 2023.
\href{https://drive.google.com/file/d/1hC5Bq1GxUmDMR7VGkJmTT_QnuLq-t9vt/view}{Author's notes}.
Accessed 19 October 2026.

\bibitem{Crowell}
R.~H. Crowell, \emph{Corresponding group and module sequences},
Nagoya Math. J. \textbf{19} (1961), 27--40.
\href{https://doi.org/10.1017/S0027763000002373}{doi:10.1017/S0027763000002373}.

\bibitem{DP}
M.~K. D\k{a}bkowski and J.~H. Przytycki,
\emph{Burnside obstructions to the Montesinos--Nakanishi $3$-move conjecture},
Geom. Topol. \textbf{6} (2002), 355--360.
\href{https://doi.org/10.2140/gt.2002.6.355}{doi:10.2140/gt.2002.6.355}.

\bibitem{DS}
S.~Duzhin and M.~Shkolnikov,
\emph{Bipartite knots}, Fund. Math. \textbf{225} (2014), no.~1, 95--102.
\href{https://doi.org/10.4064/fm225-1-6}{doi:10.4064/fm225-1-6}.

\bibitem{FK}
T.~Fiedler and V.~Kurlin,
\emph{Recognizing trace graphs of closed braids},
Osaka J. Math. \textbf{47} (2010), no.~4, 885--909.
\href{https://doi.org/10.18910/7426}{doi:10.18910/7426}.

\bibitem{FT}
S.~Friedl and P.~Teichner,
\emph{Clarification to ``New topologically slice knots''},
authors' note, 13 December 2007.
\href{https://friedl.app.uni-regensburg.de/papers/topslice_second_correction.pdf}{Author's version}.

\bibitem{HLW}
J.~Hom, T.~Lidman and L.~Watson,
\emph{The Alexander module, Seifert forms, and categorification},
J. Topol. \textbf{10} (2017), no.~1, 22--100.
\href{https://doi.org/10.1112/topo.12001}{doi:10.1112/topo.12001}.

\bibitem{Joyce}
D.~Joyce, \emph{A classifying invariant of knots, the knot quandle},
J. Pure Appl. Algebra \textbf{23} (1982), no.~1, 37--65.
\href{https://doi.org/10.1016/0022-4049(82)90077-9}{doi:10.1016/0022-4049(82)90077-9}.

\bibitem{Kirby}
R.~Kirby (ed.), \emph{Problems in low-dimensional topology},
in \emph{Geometric Topology} (Athens, GA, 1993),
W.~H. Kazez (ed.), AMS/IP Stud. Adv. Math. \textbf{2.2},
Amer. Math. Soc. and International Press, 1997, 35--473.

\bibitem{Atlas}
The Knot Atlas, \emph{$9_{46}$} and \emph{Braid Representatives},
\url{https://katlas.org/wiki/9_46},
\url{https://katlas.org/wiki/Braid_Representatives}.
Accessed 19 October 2026.

\bibitem{PT}
J.~H. Przytycki and T.~Tsukamoto,
\emph{The fourth skein module and the Montesinos--Nakanishi conjecture for $3$-algebraic links},
J. Knot Theory Ramifications \textbf{10} (2001), no.~7, 959--982.
\href{https://doi.org/10.1142/S0218216501001281}{doi:10.1142/S0218216501001281}.

\bibitem{Traldi}
L.~Traldi, \emph{Multivariate Alexander colorings},
J. Knot Theory Ramifications \textbf{27} (2018), no.~14, 1850076.
\href{https://doi.org/10.1142/S0218216518500761}{doi:10.1142/S0218216518500761}.

\end{thebibliography}
\end{document}